\documentclass[a4,reqno]{amsart}
\usepackage{amsmath}
\usepackage{amssymb}
\usepackage{amsthm}

\usepackage{nccmath}

\usepackage[usenames]{color}
\usepackage{eepic,epic}



\textheight 22.5  true cm
\textwidth 15 true cm
\voffset -1.0 true cm
\hoffset -1.0 true cm
\marginparwidth= 2 true cm


\newtheorem{thm}{Theorem}[section]

\newtheorem{lem}[thm]{Lemma}

\theoremstyle{definition}
\newtheorem{defn}[thm]{Definition}
\theoremstyle{remark}
\newtheorem{rem}[thm]{Remark}
\numberwithin{equation}{section}




\begin{document}

\title[Optimal bounds for Oscillating convolution operators on the Heisenberg group]
{Optimal conditions for  $L^2$ boundedness of strongly singular convolution operators on the Heisenberg group}
\author{Woocheol Choi}
\subjclass[2000]{Primary}

\address{School of Mathematical Sciences, Seoul National University, Seoul 151-747, Korea}
\email{chwc1987@snu.ac.kr}
\maketitle
\begin{abstract} Strongly singular convolution operators $T_{K_{\alpha,\beta}}$ with the kernel $K_{\alpha,\beta}$ on the Heisenberg group $\mathbb{H}^{n}_a$ were introduced in \cite{lyall}. For case ${a^2} < C_{\beta}$, Laghi and Lyall \cite{laghi} obtained the sharp range for $\alpha,\beta$ for which the operator $T_{K_{\alpha,\beta}}$ are bounded  on $L^2 (\mathbb{H}^n_a)$. They used the classical $L^2 - L^2$ boundedness theorem for oscillatory integral operators with non-degenerate phases. But, if $a^2 \geq C_{\beta}$, the phase function related with the operators are no more non-degenerate. However,  in this paper, we obtain the sharp range for $\alpha, \beta$ for the case $a^2 \geq C_{\beta}$.  To carry out this case, we show that the canonical graph related with the phases satisfy folding type conditions and utilize the recent developed theory for degenerate oscillatory integral operators (see \cite{GR}, \cite{PS}).
\end{abstract}

\section{Introduction}
Our setting is on the Heisenberg group $\mathbb{H}^n_a$ $( a \in \mathbb{R}^{*})$ which have base manifold $\mathbb{R}^{2n+1}$ with the group law
\begin{eqnarray*}
(x,t) \cdot (y,s) = (x+ y, s+t -2 a x^{t} J y)
\end{eqnarray*}
where $J$ is the $2n \times 2n $ matrix
\begin{eqnarray*}
\begin{pmatrix} 0 & I_n \\ - I_n & 0
\end{pmatrix}
\end{eqnarray*}
($I_n$ is the $n \times n $ identity matrix). 
 This group has the following dilation law 
\begin{eqnarray*}
\lambda \cdot (x,t) = (\lambda x , \lambda^2 t ).
\end{eqnarray*}
For each kernel $K$, associated convolution operators are defined by
\begin{eqnarray*}
T_K f (x,t)  : = K * f (x,t) = \int_{\mathbb{H}^n} K\left( (x,t) \cdot (y,s)^{-1} \right) f(y,s) dy dx.
\end{eqnarray*}
We say that the operator $T_K$ is bounded on $L^{p}(\mathbb{H}^n)$ if there exist a $C > 0$ such that
\begin{eqnarray*}
\| T_K f \|_p \leq C \|f \|_p, \quad \textrm{for all } ~ f \in C^{\infty}_{0} ( \mathbb{H}^n) 
\end{eqnarray*}
And a natural quasi-norm on the Heisenberg group is defined by $\rho (x,t) = (|x|^4 + t^2 )^{1/4}$. This quasi-norm satisfies $\rho (\lambda \cdot(x,t)) = \lambda \rho (x,t)$.
For this quasi-norm, we define the strongly singular kernels for each $\alpha >0$ 
\begin{itemize}
\item $K_{\alpha, \beta} (x,t) ={\rho(x,t)^{- (2n+2+\alpha)}}e^{i\rho(x,t)^{-\beta}} \chi ( \rho(x,t)), \quad \beta>0,$
\end{itemize}
where $\chi$ is a smootho bump function in a small neighborhood of the origin. We let the operator $T_{K_{\alpha,\beta}}$ be the convolution operators on the Heisenberg group $\mathbb{H}_a^n$ with the kernel $K_{\alpha,\beta}$. This operator was firstly introduced in \cite{lyall} and there, the necessary condition is referred as  $\alpha \leq (n+\frac{1}{2}) \beta$ and it was shown that $T_{K_{\alpha,\beta}}$ is bounded if $\alpha \leq n \beta$. To show this, the group fourier transform was utilized and a lengthy calculation for estimating some oscillatory integrals was needed to obtain the results. But, Laghi and Lyall \cite{laghi} showed that we can get sharp results in the restricted case $a^2 < C_{\beta}$ for some $C_{\beta} >0$ only using the H\"ormander's $L^2-L^2$ boundedness theorems for non-degenerate oscillatory integral operators \cite{ho}. In this paper, we consider the cases $a^2 \geq C_{\beta}$ and obtain sharp conditions using the recent theory for oscillatory integral operators with degenerate phases (See section 2 for the details). The theory for the degenerate oscillatory integral operators have been developed deeply, but the use of the theorem has been restricted to the X-ray transform. 
\

Strongly singular convolution operators were studied originally in $\mathbb{R}^n$. Such operators represent some oscillating multipliers and operators of this type were first studied, using Fourier transform techniques, in the Euclidean with $\rho(x) = |x|$ by Hirschman \cite{hi} in the case d = 1 and then in higher
dimensions by Wainger \cite{Wainger}, Fefferman \cite{Fe}, and Fefferman and Stein \cite{Fe2}. 
\

On the other hand, a similar kind of convolution operators with the kernel $ \frac{1}{|x|^{n-\alpha}} e^{i |x|^{\beta}}$ with $\alpha, \beta >0$ also have been studied in \cite{PSam},\cite{Sj}. This kernel has no sinularity near the zero, but it has relatively small decaying property at infinity. Note that the case $\beta=1$ corresponds to the kernel of Bochner-Riesz means. For $\beta \neq 1$, the $L^p-L^q$ estimates including the hardy space estimates were well understood. The difference between two cases comes from the fact that the phase kernel $|x-y|^{\beta}$ is degenerate only if $\beta =1$.  We now consider an analogue problem on the Heisenberg groups with the following kernels
\begin{itemize}
\item $L_{\alpha, \beta}(x,t) = {\rho(x,t)^{- (2n+2-\alpha)}}e^{i\rho(x,t)^{\beta}} \chi (\rho(x,t)^{-1}), \quad \beta > 0.$
\end{itemize}
We denote the gruop convolution operators $T_{L_{\alpha,\beta}}$ with the kernel $L_{\alpha,\beta}$. In literatures, operators like $T_{K_{\alpha,\beta}}$ are called as strongly singular operators and $T_{L_{\alpha,\beta}}$ as oscillating convolution operators.
\

In this paper, we shall find the optimal ranges of $\alpha$ and $\beta$ where the convolution operators associated with $K_{\alpha,\beta}$ ,$L_{\alpha, \beta}$ are bounded on $L^2 (\mathbb{H}^n)$.

\
For $\frac{a^2}{b^2} \geq C_{\beta}$, the phases are no more non-degenerate. So, we need to deal with oscillatory integral operators with degenerate phases. Theory for this kind of operators have been studied largely with considering various conditions on phase functions to give a different decaying properties, See \cite{GR2}. We will  use results in  \cite{GR},\cite{PS}. To utilize such theory, we should carefully investigate the folding type for our phases. Interestingly, we have different folding types according to the values $a, b $ and $\beta$. Before stating our results, we recall the  former results in \cite{laghi}, \cite{lyall}.

\newtheorem*{zl}{Theorems \cite{laghi},\cite{lyall}} 
\begin{zl}
\
\begin{enumerate}
\item  $T_{K_{\alpha,\beta}}$ is bounded on $L^2  (\mathbb{H}^n)$ if $ \alpha \leq n \beta$.
\item  If $ 0 < {a^2} < C_{\beta}$, then $T_{K_{\alpha,\beta}}$ is bounded on $L^2 (\mathbb{H}^n)$ if and only if $\alpha \leq (n+ 1/2) \beta$. ($C_{\beta} = \frac{\beta+2}{2} ( 2\beta + 5 + \sqrt{(2\beta+5)^2 -9})$)
\end{enumerate}
\label{zl}
\end{zl}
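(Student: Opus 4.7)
The plan is to reduce $T_{K_{\alpha,\beta}}$ to a model oscillatory integral operator on $\mathbb{R}^{2n}$ carrying a large parameter, and then to invoke H\"ormander's $L^{2}$-boundedness theorem for non-degenerate phases. First, I dyadically decompose $K_{\alpha,\beta}=\sum_{j\ge 0}K_{j}$, with $K_{j}$ supported where $\rho(x,t)\sim 2^{-j}$. Since the Heisenberg dilation $\delta_{2^{-j}}$ is $L^{2}$-unitary (after Jacobian correction) and is a group automorphism, conjugation reduces $T_{K_{j}}$ to the convolution operator $T_{K_{j}^{\flat}}$ with the rescaled kernel $K_{j}^{\flat}(x,t)=2^{j\alpha}\rho^{-(2n+2+\alpha)}e^{i\lambda_{j}\rho^{-\beta}}\varphi(\rho)$ supported in $\rho\sim 1$, carrying a single large parameter $\lambda_{j}:=2^{j\beta}$. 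A model bound of the form $\|T_{K_{j}^{\flat}}\|_{L^{2}\to L^{2}}\lesssim 2^{j\alpha}\lambda_{j}^{-(n+1/2)}$ would produce $\|T_{K_{j}}\|\lesssim 2^{j(\alpha-(n+1/2)\beta)}$, and summing over $j\ge 0$ yields the sufficient condition $\alpha\le(n+1/2)\beta$.

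Using $(x,t)\cdot(y,s)^{-1}=(x-y,t-s+2ax^{t}Jy)$ and taking partial Fourier transform in the central variable $t$, the model $T_{K_{j}^{\flat}}$ is unitarily conjugate to the direct integral of the family
\begin{equation*}
A_{\tau}g(x) = \int \widehat{K_{j}^{\flat}}(x-y,\tau)\,e^{2ia\tau x^{t}Jy}\,g(y)\,dy,\qquad \tau\in\mathbb{R},
\end{equation*}
where $\widehat{K_{j}^{\flat}}(\cdot,\tau)$ denotes partial Fourier transform in $t$. By Plancherel it suffices to bound $A_{\tau}$ on $L^{2}(\mathbb{R}^{2n})$ uniformly in $\tau$. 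A one-dimensional stationary phase in $t$ applied to $\widehat{K_{j}^{\flat}}(z,\tau)$ identifies a critical point $t^{\ast}(z,\tau)$ on the relevant regime $|\tau|\lesssim\lambda_{j}$ (outside of which non-stationary integration by parts gives rapid decay) and yields the asymptotic $\widehat{K_{j}^{\flat}}(z,\tau)\sim 2^{j\alpha}\lambda_{j}^{-1/2}\,e^{i\Psi^{\ast}(z,\tau)}\,b(z,\tau)$ with smooth amplitude $b$. Thus $A_{\tau}$ is realised as the oscillatory integral operator with combined phase $\Phi(x,y;\tau)=\Psi^{\ast}(x-y,\tau)+2a\tau x^{t}Jy$ and large parameter of size $\lambda_{j}$.

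The decisive step is non-degeneracy of the mixed Hessian $\partial_{x}\partial_{y}\Phi=-D^{2}\Psi^{\ast}(x-y,\tau)+2a\tau J$. Writing $\tau=\lambda_{j}\sigma$ with $|\sigma|\lesssim 1$ and exploiting the homogeneity of $\Psi^{\ast}$, one factors out $\lambda_{j}$ and reduces to checking $\det\!\bigl[-\mathcal{H}(\sigma)+2a\sigma J\bigr]\neq 0$ for an explicit symmetric matrix $\mathcal{H}(\sigma)$ coming from the Hessian of $\rho^{-\beta}$. Expansion through the symplectic block structure of $J$ produces a quadratic in $a^{2}$ whose vanishing locus is exactly $a^{2}=C_{\beta}$, with $C_{\beta}=\tfrac{\beta+2}{2}\bigl(2\beta+5+\sqrt{(2\beta+5)^{2}-9}\bigr)$ arising as the larger root. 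Under the hypothesis $a^{2}<C_{\beta}$ the phase is non-degenerate, H\"ormander's theorem gives $\|A_{\tau}\|_{L^{2}\to L^{2}}\lesssim 2^{j\alpha}\lambda_{j}^{-1/2}\lambda_{j}^{-n}=2^{j(\alpha-(n+1/2)\beta)}$, and summing in $j$ closes the sufficiency half of (2). The necessity $\alpha\le(n+1/2)\beta$ is then obtained by a Knapp-type example adapted to the $2^{-j}$-shell and to a direction in which $\Phi$ is slowly varying, giving the matching lower bound. For the unconditional range (1) I would instead invoke Lyall's original argument through the Schr\"odinger representation, in which the group Fourier transform reduces the boundedness to uniform estimates for suitable Hermite-type operators.

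The principal obstacle I anticipate is the mixed-Hessian computation that identifies $C_{\beta}$ cleanly: one must extract the correct quadratic in $a^{2}$ from the interplay between the symplectic block structure of $J$ and the homogeneity of $\rho^{-\beta}$, and confirm that the discriminant $(2\beta+5)^{2}-9$ pins down the precise threshold. This is precisely the quantity that ceases to guarantee non-degeneracy when $a^{2}\ge C_{\beta}$, the regime that the present paper treats by appealing to the degenerate-phase theory of \cite{GR} and \cite{PS}.
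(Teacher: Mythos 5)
This statement is a recalled result from \cite{laghi} and \cite{lyall}; the paper does not reprove it, but its Sections 2--4 contain exactly the intended mechanism for part (2): dyadic decomposition and rescaling reduce matters to an oscillatory integral operator on $\mathbb{R}^{2n+1}$ with phase $\rho^{-\beta}\bigl((x,t)\cdot(y,s)^{-1}\bigr)$ and parameter $\lambda_j=2^{j\beta}$, the full $(2n+1)\times(2n+1)$ mixed Hessian is computed (Lemma \ref{det}), its determinant is shown to be nonvanishing precisely when $a^2<C_\beta$ because $f(x,t)=2(\beta+1)|x|^8+(3(\beta+2)-2a^2)|x|^4t^2+(\beta+2)a^2t^4>0$ there (Lemma \ref{factor}), and H\"ormander's theorem in dimension $2n+1$ gives the gain $\lambda_j^{-(2n+1)/2}$, i.e.\ the exponent $(n+\tfrac12)\beta$ in one stroke. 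Your route is genuinely different: you take a partial Fourier transform in the central variable, perform a one-dimensional stationary phase to gain $\lambda_j^{-1/2}$, and then apply H\"ormander in dimension $2n$ to the reduced operators $A_\tau$. The numerology matches, and the reduction to $\sup_\tau\|A_\tau\|$ by Plancherel is correct, but this is closer in spirit to Lyall's original representation-theoretic argument than to the Laghi--Lyall proof the paper builds on.

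There are three concrete gaps. First, the endpoint: the statement asserts boundedness at $\alpha=(n+\tfrac12)\beta$, where your bounds $\|T_{K_j}\|\lesssim 2^{j(\alpha-(n+1/2)\beta)}$ are each $O(1)$ and ``summing over $j$'' does not close; one needs the almost-orthogonality estimate $\|T_j^*T_{j'}\|+\|T_jT_{j'}^*\|\lesssim_N 2^{-\max\{j,j'\}N}$ for $|j-j'|$ large and Cotlar--Stein, exactly as in the paper's Section 2. Second, the one-dimensional stationary phase in $t$ is not uniformly non-degenerate: $\partial_t^2\rho^{-\beta}$ vanishes on a nontrivial set, so the asserted asymptotic $\widehat{K_j^\flat}(z,\tau)\sim 2^{j\alpha}\lambda_j^{-1/2}e^{i\Psi^*}b$ with smooth $b$, uniformly over $|\tau|\lesssim\lambda_j$, requires justification (and for $|\tau|\ll\lambda_j$ the effective large parameter in the twisted term $2a\tau x^tJy$ degrades, so uniformity of the $2n$-dimensional Hessian bound in $\tau$ must be checked separately). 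Third, the decisive claim that the vanishing locus of your reduced $2n\times 2n$ mixed Hessian is exactly $a^2=C_\beta$ is asserted rather than computed; this is precisely the content of the paper's Lemmas \ref{det} and \ref{factor}, carried out in the unreduced $(2n+1)$-dimensional setting where the determinant factors cleanly. Until that computation is done in your reduced picture, the identification of the threshold $C_\beta$ is not established.
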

We obtain sharp results on the $L^2 \rightarrow L^2$ boundedness of $T_{K_{\alpha,\beta}}$ for the cases $a^2 \geq C_{\beta}$. Namely, we obtain the following results:
\begin{thm}\label{main1} If ${a^2} > C_{\beta}$, then $T_{K_{\alpha,\beta}}$ is bounded on $L^2 (\mathbb{H}^n_a)$ if and only if $\alpha \leq (n+ \frac{1}{3})\beta$. If ${a^2} = C_{\beta}$, then $T_{K_{\alpha,\beta}}$ is bounded on $L^{2}(\mathbb{H}^n_a)$ if and only if $\alpha \leq (n+\frac{1}{4})\beta$.
\end{thm}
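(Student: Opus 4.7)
The plan is to follow the dyadic-frequency framework standard for strongly singular kernels. Write $K_{\alpha,\beta}=\sum_{j\ge 0}K_j$ with $K_j$ supported where $\rho(x,t)\sim 2^{-j}$, and rescale via the Heisenberg dilation $(x,t)\mapsto (2^jx,2^{2j}t)$. The rescaled piece is unitarily equivalent to a unit-scale Heisenberg convolution carrying the oscillation $e^{i\lambda_j\rho^{-\beta}}$ with $\lambda_j=2^{j\beta}$, times an overall prefactor $2^{j\alpha}$. Sufficiency in Theorem~\ref{main1} thus reduces to proving
\[
\|T_j\|_{L^2\to L^2}\lesssim \lambda_j^{-(n+\frac{1}{3})} \text{ when } a^2>C_\beta, \qquad \|T_j\|_{L^2\to L^2}\lesssim \lambda_j^{-(n+\frac{1}{4})} \text{ when } a^2=C_\beta,
\]
and summing the geometric series in $j$.

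Next I would take a partial Fourier transform in the central variable $t$; by Plancherel this reduces the problem to a uniform $L^2(\mathbb{R}^{2n})$-bound, over $\tau\in\mathbb{R}$, for a family of oscillatory integral operators $T^\lambda_\tau$, while a stationary-phase evaluation in $t$ in the kernel contributes an amplitude factor $\lambda^{-1/2}$. Writing the resulting phase $\Phi_\tau(x,y)$ explicitly from the group law $(x,t)\cdot(y,s)^{-1}$ and the quasi-norm $\rho=(|x|^4+t^2)^{1/4}$, a direct computation of $\det\partial_x\partial_y\Phi_\tau$ should present its zero set as the vanishing of a polynomial in $(x,y,\tau/\lambda)$ whose discriminant is precisely $a^2-C_\beta$. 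Consequently: for $a^2>C_\beta$, the mixed Hessian vanishes simply on a smooth hypersurface and the canonical relation of $\Phi_\tau$ exhibits Whitney folds on both sides; for $a^2=C_\beta$ two fold branches coalesce, producing a Whitney cusp.

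Once the fold, resp.\ cusp, geometry is established, I apply the sharp $L^2$-bounds for degenerate oscillatory integral operators. By the Greenleaf--Seeger theorem \cite{GR} for two-sided folds, $\|T^\lambda_\tau\|\lesssim \lambda^{-n+1/6}$ uniformly in $\tau$, and by Phong--Stein \cite{PS} for the cusp, $\|T^\lambda_\tau\|\lesssim \lambda^{-n+1/4}$. Combined with the stationary-phase factor $\lambda^{-1/2}$ coming from $t$, one obtains exactly the decay rates of $T_j$ posited above, and the dyadic sum converges precisely in the claimed range of $\alpha$. Note that this is consistent with the known non-degenerate case: the H\"ormander bound $\lambda^{-n}$ produces $(n+1/2)\beta$, and the losses $1/6$ and $1/4$ for fold and cusp account for the drops to $(n+1/3)\beta$ and $(n+1/4)\beta$.

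For necessity, I would test against Knapp-type wave packets adapted to the degeneracy set: in the fold case a tube of dimensions $\lambda^{-1/2}\times\cdots\times\lambda^{-1/2}\times\lambda^{-1/3}$, with the long axis normal to the fold and suitable modulation, and analogously in the cusp case with normal scale $\lambda^{-1/4}$. Matching $\|T_{K_j}f\|_{L^2}$ against $\|f\|_{L^2}$ forces the stated upper bound on $\alpha$. The main technical obstacle is the phase analysis of the second paragraph: the Monge--Amp\`ere polynomial must be factored precisely enough to expose the discriminant $C_\beta=\tfrac{\beta+2}{2}(2\beta+5+\sqrt{(2\beta+5)^2-9})$, and the transversality hypotheses of the Greenleaf--Seeger and Phong--Stein theorems (simplicity of the fold, non-degeneracy of the cusp) must be verified in this specific Heisenberg geometry.
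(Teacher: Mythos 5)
Your overall skeleton matches the paper's: dyadic decomposition at scale $\rho\sim 2^{-j}$, rescaling by the automorphic dilations to a unit-scale oscillatory integral operator with large parameter $\lambda_j=2^{j\beta}$ and prefactor $2^{j\alpha}$, identification of the degeneracy of the mixed Hessian via a discriminant equal in sign to $a^2-C_\beta$, and an appeal to the sharp $L^2$ estimates for folding canonical relations. The numerology you check at the end is the same consistency check the paper relies on. However, you diverge in one structural respect: you take a partial Fourier transform in the central variable and run stationary phase in $t$, reducing to a $\tau$-family of $2n$-dimensional operators with an extra factor $\lambda^{-1/2}$, whereas the paper keeps the full $(2n+1)$-dimensional operator with phase $\rho^{-\beta}\bigl((x,t)\cdot(y,s)^{-1}\bigr)$ and computes the $(2n+1)\times(2n+1)$ mixed Hessian directly (exploiting the factorization $H=A(x)\,L\,A(y)^{t}$ with $\det A=1$). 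Your reduction is not obviously wrong, but it introduces a parameter $\tau/\lambda$ in which all fold hypotheses must be verified uniformly, and the stationary-phase step in $t$ itself degenerates at the boundary of the range of $\partial_t\rho^{-\beta}$; none of this is addressed, and the paper's route avoids it entirely.

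The genuine gap is that the entire technical content of the theorem --- which is what the paper actually proves --- is left as a conjecture in your second paragraph. Specifically: (i) the explicit Monge--Amp\`ere determinant, which the paper shows equals $c\,(|x|^4+a^2t^2)^{m_1}(|x|^4+t^2)^{m_2}\bigl(2(\beta+1)|x|^8+(3(\beta+2)-2a^2)|x|^4t^2+(\beta+2)a^2t^4\bigr)$ evaluated at the group difference, whose discriminant in $(|x|^4,t^2)$ produces $C_\beta^{\pm}$; (ii) the verification that on the zero set the rank of $d\pi_L,d\pi_R$ drops by exactly one (the paper's Lemma on $\det L_1\neq 0$); and (iii) the order of vanishing of $\det H$ along the null direction, which requires the non-orthogonality argument between the null vector and the gradient of $\det H$. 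Moreover, your geometric identification for $a^2=C_\beta$ is not the one the computation yields: since $f=\gamma(|x|^2-ct^2)^2$ is a perfect square, the paper obtains a \emph{two-sided fold of type 2} (determinant vanishing to second order in the null direction), not a Whitney cusp; these are different singularities, and it is the Greenleaf--Seeger type-2-fold theorem (their reference \cite{GR}), not a cusp theorem, that supplies the $1/4$ gain, while the $1/3$ gain for simple folds is Pan--Sogge \cite{PS} (not Phong--Stein). Finally, the necessity half via Knapp examples is only sketched, but that is consistent with the paper, which also does not write it out.
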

For the operators $T_{L_{\alpha,\beta}}$, we also have the sharp ranges for the $L^2 \rightarrow L^2$ boundedness except the cases $\beta = -1$ or $-2$.
\begin{thm}\label{main2} $T_{L_{\alpha,\beta}}$ is bounded on $L^2$ if and only if
\begin{enumerate}
\item[(i)] $ 0 < \beta < 1$ : For  ${a^2} < C_{\beta}$, $\alpha \leq (n+\frac{1}{2}) \beta$, for ${a^2} = C_{\beta}$, $\alpha \leq ( n+ \frac{1}{4})\beta$, and for ${a^2} > C_{\beta}$, $\alpha \leq (n+ \frac{1}{3}) \beta$.
\item[(ii)] $ 1 < \beta < 2$ : $\alpha \leq (n+\frac{1}{3}) \beta$.
\item[(iii)] $ 2 < \beta  $ : $\alpha \leq (n+ \frac{1}{2}) \beta$.
\end{enumerate}
\end{thm}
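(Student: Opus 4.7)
The strategy is to run the same dyadic--rescaling scheme used for Theorem~\ref{main1}, adapted to the fact that $L_{\alpha,\beta}$ is supported at infinity rather than near the origin. I would decompose $L_{\alpha,\beta}=\sum_{k\ge 0}L_k$ with $L_k$ supported on $\rho\sim 2^k$, and apply the Heisenberg dilation $(x,t)\mapsto(2^kx,2^{2k}t)$ to convert $T_{L_k}$ into a unit-scale oscillatory integral operator with amplitude $\sim 2^{k\alpha}$ and phase $2^{k\beta}\Phi$, where
\[
\Phi(x,t;y,s)=\rho\bigl((x,t)\cdot(y,s)^{-1}\bigr)^{\beta}.
\]
If each rescaled operator satisfies $\|T_{L_k}\|_{L^2\to L^2}\lesssim 2^{k\alpha}\,2^{-k(n+\sigma)\beta}$ for the decay exponent $\sigma$ produced by whichever oscillatory integral theorem is in force, then summing in $k$ (with a Cotlar--Stein argument at the endpoint) yields the sharp threshold $\alpha\le(n+\sigma)\beta$.

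The exponent $\sigma$ is dictated by the geometry of the canonical relation associated with $\Phi$, which I would read off from the mixed Hessian determinant $\det\partial_{(x,t)}\partial_{(y,s)}\Phi$. The key calculation is to express this determinant as a polynomial in the parameter $a$, which enters through the twist $-2a\,x^{t}Jy$ in the group law, with coefficients that depend on $\beta$. The three regimes of the theorem should correspond to the relative size of the $a$-dependent off-diagonal contribution and the $\beta$-dependent diagonal contribution. For $\beta>2$ I expect the diagonal to dominate, so the mixed Hessian is non-degenerate and H\"ormander's classical $L^{2}$ theorem yields $\sigma=1/2$. For $1<\beta<2$ the two contributions should compete, the determinant should vanish along a smooth hypersurface where the projections of the canonical relation exhibit a Whitney fold, and the theorem of Greenleaf--Seeger \cite{GR} (or Pan--Sogge \cite{PS}) then gives $\sigma=1/3$. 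For $0<\beta<1$ the situation should mirror the one analysed for $T_{K_{\alpha,\beta}}$ in Theorem~\ref{main1}: the discriminant of the determinant as a polynomial in $a^{2}$ should have the same structure, producing the same critical value $C_{\beta}$ and three sub-regimes with $\sigma=1/2$ (non-degenerate, for $a^{2}<C_{\beta}$), $\sigma=1/4$ (a cusp-type two-sided fold, at $a^{2}=C_{\beta}$), and $\sigma=1/3$ (Whitney fold, for $a^{2}>C_{\beta}$).

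The main obstacle is the explicit verification of the folding type of the canonical relation in cases (ii) and (iii). Concretely, in case (ii) one must show that the left and right projections drop rank by exactly one along a smooth hypersurface and that the kernel of each projection is transverse to the critical variety, while in case (iii) one must show that both projections are submersions on the relevant support after rescaling. Both checks should reduce, after a rotation of the transversal variables, to tracking the sign of a polynomial in $a$ whose leading coefficient is controlled by $\beta$, and the computation should essentially parallel the one already required for Theorem~\ref{main1}. The endpoint cases $\beta=1$ and $\beta=2$ are excluded precisely because they lie on the boundary between regimes of different $\sigma$, where a clean uniform bound on $\|T_{L_k}\|$ is unavailable.

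Necessity is handled by a Knapp-type construction: for each regime I would take a test function $f$ adapted to an anisotropic box about a point where the phase degenerates maximally, evaluate $T_{L_{\alpha,\beta}} f$ by stationary phase on this box, and show that $\|T_{L_{\alpha,\beta}} f\|_{2}/\|f\|_{2}$ blows up whenever $\alpha$ exceeds the stated threshold. The exponent recovered in each case matches the one given by the positive direction, proving sharpness.
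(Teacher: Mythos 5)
Your plan is essentially the paper's own argument: dyadic decomposition with Cotlar--Stein almost-orthogonality, Heisenberg rescaling to unit-scale oscillatory integral operators with large parameter $2^{k\beta}$, computation of the mixed Hessian determinant of $\rho\bigl((x,t)\cdot(y,s)^{-1}\bigr)^{\beta}$, and classification of its zero set as empty ($\beta>2$, H\"ormander, $\sigma=\tfrac12$), a one-sided fold ($1<\beta<2$, or $0<\beta<1$ with $a^2>C_\beta$, giving $\sigma=\tfrac13$ via Pan--Sogge/Greenleaf--Seeger), or a type-two fold ($0<\beta<1$, $a^2=C_\beta$, $\sigma=\tfrac14$), which is exactly the content of Lemma~\ref{factor} and Theorem~\ref{folding} once one substitutes $\beta\mapsto-\beta$ in the paper's formula for $f(x,t)$. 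The only divergence is the necessity half: the paper asserts the ``only if'' but carries out no Knapp-type lower bound in the text, so your sketch of that step is an addition with no counterpart to compare against rather than a parallel to the paper's proof.
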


\begin{rem}
For the cases $\beta = -1$ or $\beta=-2$, we can also obtain the  sharp results for some values $\alpha,b$ where the phase becomes non-degenerate or of folding type 2. But, in these cases, higher order folding type conditions than $3$ appear for some $\alpha,b$ and the present theory for degenerate oscillatory integral estimates does not cover these cases. The theory  have been established optimally only for the one or  two folding cases (\cite{GR}, \cite{PS}).  
\end{rem}

\begin{rem} 
In $\mathbb{R}^n$, the oscillating kernel is of the form $|x|^{-\gamma} e^{i|x|^{\beta}}$ with $ \beta \neq 0$. In this case, the different behavior for the phases $|x|^{\beta}$  according to the value $a$ is characterized only by the two cases where $\beta \neq 1$ or $\beta = 1$. Precisely, we have $\det \left( \frac{\partial^2}{\partial x \partial y} |x-y|^{\beta} \right) \neq 0 $ for $x\neq y$, but  $\det \left( \frac{\partial^2}{\partial x \partial y} |x-y| \right) = 0$ for every $x \neq y$ and this is the hardest case, which is correspond to the Bochner-Riesz means operators. We see that for our cases, $\beta=1$ and $\beta = 2$ present the lowest degeneracy oscillatory operators (see \ref{det}) and it also hard to establish from the lack of the theory of oscillatory integral estimates for lower folding type cases. 
\end{rem}
\
This paper is organized as follows. In section 2, we proceed an usual process to decompose the kernel dyadically and reduce our problem to a local oscillatory integral estimates. In section 3, we recall some essential things for our oscillatory integral operators with degenerate phase functions. Finally, in section 4, we study some geometry of the canonical relation and projection maps related with the phase functions. Then, we conclude the proof of the main theorems.

\

\section{Dyadic decompostion and Localization}

We use a standard argument to reduce our problems to some oscillatory integral estimates on $\mathbb{R}^{2n+1}$. For euclidean space, the argument is well explained in \cite{St}. Though the same mechanism is easily adapted, we shall prove it because our setting is on a group. 
\

We split the kernel $K_{\alpha, \beta}$ and $L_{\alpha, \beta}$ as 
\begin{eqnarray*}
K_{\alpha, \beta} (x,t) &=& \sum_{j=0}^{\infty} \eta (2^j \rho(x,t)) K_{\alpha, \beta} (x,t)
\\
L_{\alpha, \beta} (x,t) &=& \sum_{j=0}^{\infty} \eta (2^{-j} \rho(x,t)) L_{\alpha, \beta} (x,t)
\end{eqnarray*}
where $\eta \in C^{\infty}_{0}(\mathbb{R})$ is a bump function supported in $[1/2,2]$. and satisfies $\sum_{j=0}^{\infty} \eta(2^j r) =1$ for all $ 0 < r \leq 1$. 
We let $K^{j}_{\alpha, \beta} = \eta(2^{j}(\rho(x,t))) K_{\alpha, \beta}(x,t)$ and $T_j f = K_j * f$ and $L^{j}_{\alpha, \beta} = \eta(2^{-j} \rho(x,t)) L_{\alpha, \beta} (x,t)$. For notational convenience, we omit the index $\alpha$ and $\beta$ from now.
\\
We let $T_j f = K_{\alpha, \beta}^{j} * f$ and $S_j f = L_{\alpha,\beta}^{j} * f$. Then, 
\begin{lem}
For each $N \in \mathbb{N}$, there exists a constant $C_N$ such that
\begin{eqnarray*}
\| T_j^{*} T_{j'} \|_{L^2 \rightarrow L^2} + \| T_j T_{j'}^{*}\|_{L^2 \rightarrow L^2} \leq C_N 2^{-max\{j, j'\}N}
\\
\| S_j^{*} S_{j'} \|_{L^2 \rightarrow L^2} + \| S_j S_{j'}^{*}\|_{L^2 \rightarrow L^2} \leq C_N 2^{-max\{j, j'\}N}
\end{eqnarray*}
holds when $|j - j'|  \geq c_{\beta}$ for some sufficiently large constant $c_{\beta} >0$.
\end{lem}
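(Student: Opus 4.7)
The plan is to write each composition as a group convolution by a single kernel on $\mathbb{H}^n$ and then bound that kernel in $L^1(\mathbb{H}^n)$; since $\mathbb{H}^n$ is unimodular, Young's inequality $\|M * f\|_{L^2} \leq \|M\|_{L^1}\|f\|_{L^2}$ then yields the operator-norm bound. Using the paper's convention $T_K f(w) = \int K(wv^{-1}) f(v)\, dv$, the adjoint corresponds to convolution with $K^{*}(w) = \overline{K(w^{-1})}$, and a direct substitution yields
\begin{equation*}
 T_j^* T_{j'} f = M_{j,j'} * f, \qquad M_{j,j'}(w) = \int_{\mathbb{H}^n} \overline{K^j(u)}\, K^{j'}(uw)\, du,
\end{equation*}
with an analogous formula for $T_j T_{j'}^*$. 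By symmetry I may assume $j < j'$ and reduce to bounding $\|M_{j,j'}\|_{L^1(\mathbb{H}^n)}$.

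On the support of the integrand $\rho(u) \sim 2^{-j}$ and $\rho(uw) \sim 2^{-j'}$, so $\rho(w) \sim 2^{-j}$ and the $w$-support has volume $\lesssim 2^{-j(2n+2)}$ while the $u$-support has volume $\lesssim 2^{-j'(2n+2)}$. The oscillatory factor is $e^{i\Phi(u,w)}$ with $\Phi(u,w) = \rho(uw)^{-\beta} - \rho(u)^{-\beta}$. To integrate by parts in $u$ I would use the central left-invariant vector field $T = \partial_t$: by centrality of $T$ on $\mathbb{H}^n$ one has $T_u\bigl[\rho(uw)^{-\beta}\bigr] = (T\rho^{-\beta})(uw)$ with no commutator correction, and a direct computation from $\rho^4 = |x|^4 + t^2$ gives $|T\rho^{-\beta}| \sim \rho^{-\beta - 2}$. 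Thus $|T\Phi| \sim 2^{j'(\beta+2)}$ dominates the $2^{j(\beta+2)}$ contribution from $\rho(u)^{-\beta}$, and higher powers scale as $|T^k\Phi|\lesssim 2^{j'(\beta+2k)}$. Applying the IBP operator $L = (iT\Phi)^{-1}T$ a total of $N$ times, the amplitude derivatives cost only $O(2^{j'})$ per step (since $T$-differentiating $\overline{K^j(u)}K^{j'}(uw)$ produces relative size $\rho(uw)^{-2}\sim 2^{2j'}$, much smaller than $|T\Phi|$ when $\beta>0$), yielding a net gain $\lesssim 2^{-Nj'\beta}$.

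Combining this with the amplitude estimate $|K^j(u) K^{j'}(uw)| \lesssim 2^{(j+j')(2n+2+\alpha)}$ and the support volumes above gives
\begin{equation*}
 \|M_{j,j'}\|_{L^1(\mathbb{H}^n)} \lesssim 2^{-Nj'\beta + (j+j')\alpha} \leq C_N 2^{-\max\{j,j'\} N'}
\end{equation*}
for every $N'$, provided the threshold $c_\beta$ is chosen large enough to absorb the polynomial factor $2^{(j+j')\alpha}$. The argument for $T_j T_{j'}^*$ is identical after swapping the roles of the two kernels, and the statement for $S_j$, $S_j^*$ follows by the same computation since the kernels $L_{\alpha,\beta}^j$ share the same scaling structure (only the sign of the exponent of $\rho$ in the phase and the fact that the support is at large rather than small scale are changed, which does not affect the IBP). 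The main technical obstacle is the derivative bookkeeping on the non-abelian group: one must work with a left-invariant field in order that $T_u f(uw)$ be computable cleanly, and it is the centrality of $T=\partial_t$ on $\mathbb{H}^n$ that eliminates commutator corrections that would otherwise weaken the bound on $T\Phi$. Once this is verified, the estimate is a routine oscillatory-integral integration by parts.
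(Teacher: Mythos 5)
Your overall strategy --- realizing $T_j^{*}T_{j'}$ as group convolution with $M_{j,j'}(w)=\int \overline{K^j(u)}\,K^{j'}(uw)\,du$, bounding $\|M_{j,j'}\|_{L^1}$ by non-stationary-phase integration by parts, and concluding with Young's inequality on the unimodular group --- is exactly the standard route; the paper itself gives no details and simply refers to Lyall's paper for this computation. However, there is a genuine gap in the one analytic step you actually carry out: the claimed lower bound $|T\rho^{-\beta}|\sim\rho^{-\beta-2}$ for $T=\partial_t$ is false. Computing directly, $\partial_t\rho^{-\beta}=-\tfrac{\beta}{2}\,t\,(|x|^4+t^2)^{-\beta/4-1}$, which vanishes identically on the hyperplane $\{t=0\}$ and has size $|t|\,\rho^{-\beta-4}\ll\rho^{-\beta-2}$ whenever $|t|\ll\rho^2$. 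The constraint $\rho(uw)\sim 2^{-j'}$ does not keep the central coordinate of $uw$ away from $0$ (one can perfectly well have the $x$-part of $uw$ of size $2^{-j'}$ while its $t$-part vanishes), so on a substantial portion of the support of the integrand $|T\Phi|$ is arbitrarily small and the operator $L=(iT\Phi)^{-1}T$ is not even defined there. Integration by parts in the central direction alone therefore cannot produce the decay, for either the $T_j$ or the $S_j$ family.

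To repair this you must integrate by parts with the full gradient in $u$, and that is where the real work lies. One has $\nabla_x\rho^{-\beta}=-\beta|x|^2x\,\rho^{-\beta-4}$ and $\partial_t\rho^{-\beta}=-\tfrac{\beta}{2}t\,\rho^{-\beta-4}$, so the two components have different homogeneities ($\rho^{-\beta-1}$ versus $\rho^{-\beta-2}$ on their respective nondegenerate regions) and the combined gradient is only bounded below by $\rho^{-\beta-1}$, not $\rho^{-\beta-2}$. Moreover the chain rule through the group law inserts twist terms of the form $2a(Jx_w)_i(\partial_t\rho^{-\beta})(uw)$, of size up to $2^{-j}2^{j'(\beta+2)}$, into the $x_u$-derivative of $\rho(uw)^{-\beta}$; these are not automatically dominated by the main term $2^{j'(\beta+1)}$. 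Checking that the derivative of the high-frequency term still dominates both the derivative of $\rho(u)^{-\beta}$ and these twist terms is exactly what forces the hypothesis $|j-j'|\ge c_\beta$ --- its role is not, as you suggest, to absorb the harmless polynomial factor $2^{(j+j')\alpha}$. This anisotropic bookkeeping is the content of the argument in Lyall's paper that the lemma's proof cites, and without it your proposal does not close.
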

\begin{proof} The proof follows using the integration parts technique in usual ways.  See \cite{lyall} where the proof for $T_j$ is given.
\end{proof}
 By the Cotlar-Stein Lemma, we only need to show that 
\begin{eqnarray*}
\| T_j \|_{L^2 \rightarrow L^2} + \| S_j \|_{L^2 \rightarrow L^2} \leq C 
\end{eqnarray*}
holds uniformly for $j \in \mathbb{N}$ with some constant $C>0$.

\

Let
\begin{eqnarray*}
\tilde{K}^{j}_{\alpha,\beta} (x,t) &=& K_{\alpha,\beta}^{j}(2^{-j} \cdot (x,t)) = \eta(\rho(x,t)) 2^{j(Q+\alpha)} \rho(x,t)^{-Q-\alpha} e^{i2^{j\beta} \rho(x,t)^{-\beta}},
\\
\tilde{L}^{j}_{\alpha,\beta} (x,t) &=& L_{\alpha,\beta}^{j}(2^{-j} \cdot (x,t)) = \eta(\rho(x,t)) 2^{-j(Q-\alpha)} \rho(x,t)^{-Q+\alpha} e^{i2^{j\beta} \rho(x,t)^{\beta}}.
\end{eqnarray*}
Let $f_j (x,t) = f(2^{j} \cdot (x,t))$. Then $K^{j}*f (2^{-j}\cdot (x,t)) = 2^{-jQ} (\tilde{K}^{j} * f_j ) (x,t)$.

\begin{eqnarray*}
\|K^{j}_{\alpha,\beta} * f (x,t) \|_{L^2} & = & 2^{-jQ/2} \| K_{\alpha,\beta} * f (2^{-j} \cdot (x,t))\|_{L^2}
\\
&\leq & 2^{-jQ/2}\cdot 2^{-jQ} \|\tilde{K}_{\alpha,\beta}^{j} * f(2^j \cdot()) (x,t) \|_{L^2}
\\
&\leq& 2^{-jQ/2} \cdot 2^{-jQ} \|\tilde{K}_{\alpha,\beta}^{j} \|_{L^2 \rightarrow L^2} \| f (2^{-j} \cdot ) \|_{L^2}
\\
&=& 2^{-jQ} \|\tilde{K}_{\alpha,\beta}^{j} \|_{L^2 \rightarrow L^2} \|f\|_{L^2}.
\end{eqnarray*}
Similarly, we also have $\| L_{\alpha,\beta}^{j} * f \|_{L^2} \leq 2^{jQ} \| \tilde{L}_{\alpha,\beta}^{j} \|_{L^2 \rightarrow L^2} \| f \|_{L^2}$.

\

So, it suffice to prove suitable boundedness for $\tilde{T}_j$ and $\tilde{S}_j$. Now, we further modify our operator locally using the fact that the kernels of $\tilde{T}_j$ and $\tilde{S}_j$ are supported in $\{ (x,t) : \rho(x,t) \leq 2 \}$. To do this, we find a set of point $ G = \{ g_k : k \in \mathbb{N} \}$ such that $\bigcup_{k\in \mathbb{N}} B(g_k, 2 ) = \mathbb{H}^{n}_a $ and each $B( g_k, 4)$ contains only $d_n$'s other $g_l$ members in $G$.

\

We can split $f = \sum_{k=1}^{\infty} f_k$ with each $f_k$ supported in ${B(g_k ,2 )}$. And we let 
\begin{eqnarray*}
\tilde{T}_j^{l,k} f (x,t) = \int \tilde{K}_{\alpha,\beta}^{j} \left( (x,t) \cdot (y,s)^{-1} \right) \cdot \eta \left( \rho\left( (x,t) \cdot g_k^{-1}\right) \right) 
\eta \left( \rho\left( (x,t) \cdot g_l^{-1}\right) \right)  f(y,s) dy ds
\end{eqnarray*}

Then, 
\begin{eqnarray*}
\| \tilde{T}_j * f \|^{2}_{L^2(\mathbb{H}_a^{n})} &\leq& \sum_{k=1}^{\infty} \| \tilde{T}_j * f \|^2_{L^2(B(g_k,2))} 
\\
&\leq& \sum_{k=1}^{\infty} \| \tilde{T}_j * \sum_{l=1}^{\infty} f_l \|_{L^2 ( B(g_k,2))}^2
\\
&=& \sum_{k=1}^{\infty} \| \tilde{T}_j * \sum_{\{ l : \rho(g_l \cdot g_k^{-1}) \leq 2 \}} f_l \|_{L^2((B(g_k,2)))}^{2}
\\
&\lesssim& \sum_{k=1}^{\infty} \sum_{l : \rho(g_l \cdot g_k^{-1}) \leq 2} \|\tilde{T}_{j}^{l,k} \|_{L^2 \rightarrow L^2} \| f_l \|_{L^2}^2
\\
&\lesssim&  \sup_{\substack{\rho ( g_l \cdot g_k^{-1}) \leq 2}} \|\tilde{T}_j^{l,k} \|_{L^2 \rightarrow L^2}  \|f \|_{L^2}^{2}.
\end{eqnarray*} 
Because $ \det \left(D_{x,t} \left( (x,t) \cdot g\right) \right) = 1$ for all $g \in \mathbb{H}^{n}_a$, 
\begin{eqnarray*}
\tilde{T}_j^{l,k} f \left( (x,t)\cdot g_k\right) = \int \tilde{K}^{j} \left( (x,t)\cdot (y,s)^{-1}\right) \eta(\rho(x,t))\eta(\rho(y,s)\cdot (g_k\cdot g_l^{-1})) f((y,s)\cdot g_k) dy ds.
\end{eqnarray*}
Notice that $\rho(g_k\cdot g_l^{-1}) \lesssim 1 $. If we let $\psi \left( (x,t),(y,s)\right)= \eta (\rho(x,t)) \eta(\rho((y,s)\cdot (g_k\cdot g_l^{-1})))$ and substitute $f$ as $\tilde{f}(x) := f (x\cdot g_k)$.

$\sum_{j,k , \rho(g_l \cdot g_k^{-1}) \leq 2} \| \tilde{T}_j^{l,k}\|$  will be obtained if we prove $\| A_j \|_{L^2 \rightarrow L^2} \lesssim 1$ for 
\begin{eqnarray*}
A_j f (x,t) = \int \tilde{K}_{\alpha,\beta}^{j} \left( (x,t)\cdot(y,s)^{-1} \right) \psi \left( (x,t) , (y,s) \right) f (y,s) dy ds
\end{eqnarray*}
with a compactly supported smooth function $\psi$. 
Since
\begin{eqnarray*}
\tilde{K}^{j}_{\alpha,\beta} (x,t) &=& \eta(\rho(x,t)) 2^{j(Q+\alpha)} \rho(x,t)^{-Q - \alpha} e^{i 2^{j\beta} \rho(x,t)^{-\beta}}
\\
\tilde{L}^{j}_{\alpha,\beta} (x,t) &=& \eta(\rho(x,t)) 2^{- j(Q+\alpha)} \rho(x,t)^{-Q + \alpha} e^{i 2^{j\beta} \rho(x,t)^{\beta}}
\end{eqnarray*}
The matters are reduced to show that $\|T_{{A}_j}  \|_{L^2 \rightarrow L^2} \lesssim 2^{jQ}$ and $ \|T_{B_j}  \|_{L^2 \rightarrow L^2} \lesssim 2^{-jQ}$ 
Let 
\begin{eqnarray*}
A_j (x,t) &=& 2^{j\alpha} \mu(x,t) e^{i 2^{j\beta} \rho(x,t)^{-\beta}}
\\
B_j (x,t) &=& 2^{- j\alpha} \mu (x,t) e^{i 2^{j\beta} \rho(x,t)^{\beta}}
\end{eqnarray*}
where with $\mu$ is a smooth funtion supported on $\rho(x,t) \sim 1$.
We define the operators $L_{A_j}$, $L_{B_j}$ by
\begin{eqnarray*}
L_{A_j} f (x,t) = \int A_j \left( (x,t) \cdot (y,s)^{-1}\right) \psi\left( (x,t), (y,s)\right) f(y,s) dy ds
\\
L_{B_j} f(x,t) = \int B_j \left( (x,t) \cdot (y,s)^{-1} \right) \psi\left( (x,t),(y,s)\right) f(y,s) dy ds.
\end{eqnarray*}
To prove Theorem \ref{main1} and Theorem \ref{main2}, it suffices to establish the following two theorems.
\begin{thm}\label{LA} The following inequalities hold uniformly for $j$.
\

If $a^2 > C_{\beta}$, \quad $\| L_{A_j}\|_{L^2 \rightarrow L^2} \lesssim 2^{j (\alpha - (n+\frac{1}{3})\beta)}$,
\

If $a^2 = C_{\beta}$, \quad $\|L_{A_j}\|_{L^2 \rightarrow L^2} \lesssim 2^{j (\alpha - (n+\frac{1}{4})\beta)}$.
\end{thm}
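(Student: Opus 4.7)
The plan is to rewrite $L_{A_j}$ as an oscillatory integral operator with large parameter $\lambda := 2^{j\beta}$ and amplitude of magnitude $2^{j\alpha}$, namely
\begin{eqnarray*}
L_{A_j} f(x,t) = 2^{j\alpha}\int e^{i\lambda \Phi((x,t),(y,s))}\, a((x,t),(y,s))\, f(y,s)\, dy\, ds,
\end{eqnarray*}
where $\Phi((x,t),(y,s)) := \rho((x,t)\cdot(y,s)^{-1})^{-\beta}$ and $a = \mu\cdot\psi$ is smooth and compactly supported in the region $\rho((x,t)\cdot(y,s)^{-1})\sim 1$. Any bound $\|T_\lambda\|_{L^2\to L^2}\lesssim \lambda^{-\gamma}$ for the operator $T_\lambda$ with kernel $e^{i\lambda\Phi}a$ translates into $\|L_{A_j}\|_{L^2\to L^2}\lesssim 2^{j(\alpha - \gamma\beta)}$, so the theorem reduces to establishing the two oscillatory integral estimates with $\gamma = n+\tfrac13$ when $a^2 > C_\beta$ and $\gamma = n+\tfrac14$ when $a^2 = C_\beta$.

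The central object will be the mixed Hessian $H(x,t;y,s) := \partial^2_{(x,t),(y,s)}\Phi$ and the canonical relation $\mathcal{C}_\Phi$ it defines on $T^*\mathbb{R}^{2n+1}\times T^*\mathbb{R}^{2n+1}$. In the non-degenerate regime $a^2 < C_\beta$ treated by Laghi--Lyall, $\det H$ is non-vanishing on $\supp a$ and H\"ormander's theorem \cite{ho} gives the optimal decay $\lambda^{-(2n+1)/2} = \lambda^{-(n+1/2)}$. Reading $\det H$ as a polynomial in the fibre variables and the parameter $a^2$, the critical value $C_\beta = \tfrac{\beta+2}{2}\bigl(2\beta+5+\sqrt{(2\beta+5)^2-9}\bigr)$ should emerge as a discriminant threshold: below $C_\beta$ there are no real roots of $\det H$ inside $\supp a$, at $C_\beta$ a double root appears, and above $C_\beta$ the double root splits into two simple roots. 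Accordingly, for $a^2 > C_\beta$ the zero set of $\det H$ is a smooth hypersurface $\Sigma\subset \supp a$ along which $\det H$ vanishes simply, whereas at $a^2 = C_\beta$ the vanishing is of second order.

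The final step is to verify the fold / cusp hypotheses of the degenerate oscillatory integral theory. For $a^2 > C_\beta$ I would show that both projections $\pi_L,\pi_R:\mathcal{C}_\Phi\to T^*\mathbb{R}^{2n+1}$ have Whitney folds along the lift of $\Sigma$; concretely this amounts to the simple vanishing of $\det H$ together with the transversality condition that $\ker H|_\Sigma$ be nowhere tangent to $\Sigma$. The two-sided fold estimate of \cite{GR} then yields $\lambda^{-(n+1/3)}$, which is exactly the $1/6$ loss relative to the non-degenerate rate. At the borderline $a^2 = C_\beta$ the geometry becomes cusp type: the second-order vanishing of $\det H$, suitably non-degenerate in the transverse direction, fits the hypotheses of the Phong--Stein estimate \cite{PS}, which delivers the $1/4$ loss and hence $\lambda^{-(n+1/4)}$. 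Combining either bound with the amplitude factor $2^{j\alpha}$ and $\lambda = 2^{j\beta}$ completes the proof.

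The main obstacle will be the geometric computation itself: writing $\Phi = \rho((x,t)\cdot(y,s)^{-1})^{-\beta}$ out in affine Heisenberg coordinates, expressing $\det H$ in a form from which its roots can be read off as functions of $a$ and $\beta$, and then verifying the fold and cusp non-degeneracy conditions required to invoke \cite{GR} and \cite{PS}. In particular, identifying $C_\beta$ precisely as the discriminant threshold and checking that the kernel direction of $H$ meets $\Sigma$ transversely (resp.\ with the prescribed second-order contact at the critical parameter) is where the bulk of the work lies; this analysis is carried out in Section~4.
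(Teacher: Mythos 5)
Your proposal follows essentially the same route as the paper: rescale to an oscillatory integral operator with large parameter $\lambda = 2^{j\beta}$, compute and factor $\det H$ so that $C_\beta$ appears as the discriminant threshold (no real zeros below, a double root at, two simple roots above), verify the two-sided fold conditions for $\pi_L,\pi_R$ via transversality of the kernel direction to the zero set of $\det H$, and invoke the degenerate oscillatory integral estimates. One correction of terminology and attribution: the borderline case $a^2 = C_\beta$ is not a cusp (for a genuine two-sided cusp the sharp gain would be $3/8$) but a two-sided fold of type $2$, i.e.\ $\det H$ vanishes to second order in the null direction while the kernel stays transverse to the singular variety, and it is the Greenleaf--Seeger theorem \cite{GR} that yields the $1/4$ gain there, whereas the Pan--Sogge result \cite{PS} is the Whitney-fold ($1/3$) estimate used for $a^2 > C_\beta$.
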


\begin{thm}\label{LB} The following inequalities hold uniformly for $j$.
\\
$(i) -1 < \beta < 0, $
\

If $a^2 < C_{\beta}$, \quad$\|L_{B_j}\|_{L^2 \rightarrow L^2} \lesssim 2^{j (\alpha -(n+\frac{1}{2})\beta)}$,
\

If $a^2 = C_{\beta}$, \quad$\|L_{B_j}\|_{L^2 \rightarrow L^2} \lesssim 2^{j (\alpha - (n+\frac{1}{4})\beta)}$.
\\
$(ii) -2 < \beta < -1,$ \quad$\|L_{B_j}\|_{L^2 \rightarrow L^2} \lesssim 2^{j (\alpha - (n+\frac{1}{3})\beta)}$.
\\
$(iii)  ~\beta < -2,$\qquad $\| L_{B_j} \|_{L^2 \rightarrow L^2} \lesssim 2^{j (\alpha -(n+\frac{1}{2})\beta)}$.
\end{thm}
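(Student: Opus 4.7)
The plan is to cast $L_{B_j}$ as a localized oscillatory integral operator with large parameter $\lambda = 2^{j\beta}$ and then apply the sharp $L^2$ decay theory of non-degenerate and degenerate oscillatory integral operators recalled in Section 3. Concretely, using the explicit form of the Heisenberg product
\[
(x,t)\cdot(y,s)^{-1} = \bigl(x-y,\,t-s+2a\,x^{t}Jy\bigr),
\]
one writes
\[
L_{B_j} f(x,t) = 2^{-j\alpha}\int e^{i\,2^{j\beta}\,\Phi(x,t,y,s)}\,a(x,t,y,s)\,f(y,s)\,dy\,ds,
\]
with phase $\Phi(x,t,y,s) = \rho\bigl((x,t)\cdot(y,s)^{-1}\bigr)^{\beta}$ and amplitude $a$ compactly supported in the region $\rho\sim 1$.

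The heart of the proof is the geometric analysis of the canonical relation $\mathcal{C}_\Phi = \{(x,t,\nabla_{x,t}\Phi;\,y,s,-\nabla_{y,s}\Phi)\}$ together with its two spatial projections $\pi_L,\pi_R$. I would compute the mixed Hessian $H = \partial^{2}\Phi/\partial(x,t)\partial(y,s)$, then, exploiting the block structure coming from the group law, factor $\det H$ into a product involving $\rho$, $|x-y|^{2}$, and a scalar polynomial $P_\beta(a^{2})$ whose discriminant vanishes precisely at $a^{2}=C_\beta$. This pinpoints the critical set $\Sigma=\{\det H=0\}$ on the support of $a$ and explains the appearance of $C_\beta$ in each regime of $\beta$.

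Next, I match each case of the theorem with the oscillatory-integral bound dictated by the corresponding singularity type. When $\Phi$ is nondegenerate on the support of $a$, Hörmander's theorem yields the decay $\lambda^{-(2n+1)/2}$; combined with the prefactor $2^{-j\alpha}$ this produces the exponent $(n+\tfrac12)\beta$, covering case (iii) as well as the subcase $a^{2}<C_\beta$ of case (i). When $\pi_L,\pi_R$ have simultaneous Whitney folds along $\Sigma$, the Greenleaf-Seeger fold estimate of \cite{GR} gives $\lambda^{-n-1/3}$, yielding the exponent $(n+\tfrac13)\beta$, which covers case (ii) (and would cover $a^2>C_\beta$ in case (i) by the same mechanism). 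Finally, when the Whitney-fold condition degenerates further into a two-sided fold / cusp, the Phong-Stein estimate of \cite{PS} gives $\lambda^{-n-1/4}$, producing the exponent $(n+\tfrac14)\beta$ for the subcase $a^{2}=C_\beta$ of case (i). Multiplying by $2^{-j\alpha}$ in each case yields the bounds stated.

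The main obstacle is the folding analysis itself: writing $\det H$ in a form that makes the dependence on $(a,\beta)$ sufficiently transparent to locate the discriminant threshold $C_\beta$, and then checking that the transversality hypotheses of the Greenleaf-Seeger and Phong-Stein theorems hold uniformly in the amplitude localization. The latter amounts to verifying the nonvanishing of the appropriate higher-order normal derivative of $\det H$ along $\Sigma$, a task that must be done case by case for $\beta$ in each of the three intervals and at the critical value $a^{2}=C_\beta$. These geometric computations are carried out in Section 4 and fed into the framework of Section 3 to close the argument.
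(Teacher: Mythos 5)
Your proposal is correct and follows essentially the same route as the paper: reduce $L_{B_j}$ to an oscillatory integral operator with parameter $\lambda=2^{j\beta}$ on $\mathbb{R}^{2n+1}$, factor $\det H$ to locate the threshold $a^{2}=C_\beta$, classify the fold type of $\pi_L,\pi_R$ on $\{\det H=0\}$, and then invoke H\"ormander's bound $\lambda^{-n-1/2}$, the one-fold bound $\lambda^{-n-1/3}$, and the two-fold bound $\lambda^{-n-1/4}$ to produce the three exponents. The only slip is bibliographic: in the paper the one-fold estimate is from Pan--Sogge \cite{PS} and the two-fold estimate from Greenleaf--Seeger \cite{GR}, the reverse of your attribution.
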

In the next section, we shall breifly review on the theory related to the operators $L_{A_j}$ and $L_{B_j}$. And in section 4, we will expoit some required geometry related with the phase function $\rho(x,t)^{\beta}$ to conclude the proof of Theorem \ref{LA} and Theorem \ref{LB}.
\section{$L^2$ theory for oscillatory integral operators}
In this section, we review on some $L^2 \rightarrow L^2 $ theory for oscillatory integral operators. The form of operators we are concern is given as
\begin{eqnarray*}
T^{\phi}_{\lambda} f (x) = \int_{\mathbb{R}^n} e^{i \lambda \phi(x,y)} a(x,y) f (y) dy
\end{eqnarray*}
where $\phi, a \in C^{\infty}(\mathbb{R}^n \times \mathbb{R}^n)$ and $a$ has a compact support. We firstly state the fundamental theorem of H\"ormander \cite{ho}.
\begin{thm} Suppose that the phase function $\phi$ satisfies $\det\left( \frac{\partial^2 \phi}{\partial x_i \partial y_j} \right) \neq 0$ on the support of $a$. Then we have the following inequality
\begin{eqnarray*}
\|T^{\phi}_{\lambda} \|_{L^2 \rightarrow L^2} \lesssim \lambda^{-\frac{n}{2}},\quad \lambda \geq 1
\end{eqnarray*}
where the implicit constant is independent of $\lambda$. 
\end{thm}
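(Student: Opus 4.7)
The plan is to run the standard $TT^{*}$ argument combined with non-stationary phase, which is the textbook route to H\"ormander's bound. The upshot is that non-degeneracy of the mixed Hessian $\partial_{xy}^{2}\phi$ forces the phase of the $TT^{*}$ kernel to be non-stationary in $y$ away from the diagonal $x=x'$, so repeated integration by parts yields pointwise decay that is integrable uniformly in one variable.

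First I would compute the Schwartz kernel of $T_{\lambda}^{\phi}(T_{\lambda}^{\phi})^{*}$:
\begin{equation*}
K_{\lambda}(x,x')=\int e^{i\lambda[\phi(x,y)-\phi(x',y)]}\,a(x,y)\overline{a(x',y)}\,dy.
\end{equation*}
Write $\Psi(y;x,x')=\phi(x,y)-\phi(x',y)$ and observe
\begin{equation*}
\nabla_{y}\Psi(y;x,x')=\nabla_{y}\phi(x,y)-\nabla_{y}\phi(x',y)=\int_{0}^{1}\partial_{x}\nabla_{y}\phi\bigl(x'+\tau(x-x'),y\bigr)(x-x')\,d\tau.
\end{equation*}
By hypothesis $\det(\partial_{x}\partial_{y}\phi)\neq 0$ on the compact set $\supp a$, so shrinking the support of $a$ if necessary (and using a finite partition of unity to reduce to that case) one gets the lower bound $|\nabla_{y}\Psi(y;x,x')|\gtrsim |x-x'|$ uniformly for $(x,y),(x',y)\in\supp a$.

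Next I would exploit this lower bound through the standard non-stationary phase operator $L=\frac{1}{i\lambda}\frac{\nabla_{y}\Psi\cdot\nabla_{y}}{|\nabla_{y}\Psi|^{2}}$, which satisfies $L(e^{i\lambda\Psi})=e^{i\lambda\Psi}$. Integrating by parts $N$ times in $y$ and estimating the resulting smooth amplitude (whose derivatives remain uniformly bounded thanks to compact support and the lower bound on $|\nabla_{y}\Psi|$) yields
\begin{equation*}
|K_{\lambda}(x,x')|\leq C_{N}\bigl(1+\lambda|x-x'|\bigr)^{-N},
\end{equation*}
and $K_{\lambda}$ vanishes unless $|x-x'|\lesssim 1$ because of the compact support of $a$.

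Finally I would apply Schur's test. For $N>n$,
\begin{equation*}
\sup_{x}\int|K_{\lambda}(x,x')|\,dx' + \sup_{x'}\int|K_{\lambda}(x,x')|\,dx \lesssim \int_{|z|\lesssim 1}(1+\lambda|z|)^{-N}\,dz\lesssim \lambda^{-n},
\end{equation*}
so $\|T_{\lambda}^{\phi}(T_{\lambda}^{\phi})^{*}\|_{L^{2}\to L^{2}}\lesssim\lambda^{-n}$, and taking square roots gives the claimed bound $\|T_{\lambda}^{\phi}\|_{L^{2}\to L^{2}}\lesssim\lambda^{-n/2}$. The only genuinely non-routine point is the lower bound on $|\nabla_{y}\Psi|$; once the mixed Hessian is invertible on the compact amplitude support, everything else is mechanical. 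This is why the subsequent sections must work much harder, since in the degenerate regime $a^{2}\geq C_{\beta}$ this lower bound fails and one must replace the scalar integration-by-parts with the folding-type estimates of \cite{GR}, \cite{PS}.
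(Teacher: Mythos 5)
Your proof is correct: the $TT^{*}$ reduction, the lower bound $|\nabla_{y}\Psi|\gtrsim|x-x'|$ obtained from the invertible mixed Hessian after localizing the amplitude, the non-stationary-phase decay $(1+\lambda|x-x'|)^{-N}$, and Schur's test give exactly H\"ormander's bound $\lambda^{-n/2}$. The paper does not prove this statement at all --- it is recalled as the classical theorem of H\"ormander \cite{ho} --- and your argument is precisely the standard proof of that result (as presented, e.g., in \cite{St}), so there is nothing to compare beyond noting that you have supplied the omitted classical argument correctly.
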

We say that $\phi$ is non-degenerate if it satisfies the assumption of the above theorem. And we say $\phi$ is degenerate if there is some point $(x_0, y_0)$ where $\det \left.\left(\frac{\partial^2}{\partial x_i \partial y_j}\right)\right |_{(x_0,y_0)}$ equals to zero.
This theorem gives the sharp decaying of $\|T_{\lambda}^{\phi}\|_{L^2\rightarrow L^2}$ in terms of $\lambda$. But, the phase functions of our operators (see  ) can become degenerate according to the values of $a$ and $\beta$. For a degenerate $\phi$, the optimal number $\kappa_{\phi}$ for which the inequality $\|T_{\lambda}\|_{L^2 \rightarrow L^2} \lesssim \lambda^{-\kappa_{\phi}}$ holds would be less than $\frac{n}{2}$. The number $\kappa_{\phi}$'s are related to the folding type conditions of the phase $\phi$ (See below). For phases whose folding degrees are $\leq 3$, the sharp numbers $\kappa_{\phi}$ were obtained in \cite{GR}, \cite{PS}. We shall use the results. In fact, the results for folding types $\leq 3$ in \cite{GR} is the best known results and there are no results for folding types $>3$ except the very restricted result in \cite{cu}. Finding completely the numbers $\kappa_{\phi}$ corresponding to all degenerate phases $\phi$ seems a widely open problem.

It is well-known that the decaying property is strongly related th the geometry of the canonical relation
\begin{eqnarray}\label{relation1}
C_{\phi} = \{ (x, \phi_x (x,y), y, - \phi_y (x,y) ) \subset T^{*} (\mathbb{R}^n_x) \times T^{*} (\mathbb{R}^n_y)
\end{eqnarray}
To describe the geometry of $C_{\phi}$, we need the following definition

\begin{defn}\label{def}
Let $M_1$,$M_2$ be smooth manifolds of dimension $n$, and $f : M_1 \rightarrow M_2$ is a smooth map of corank $\leq 1$. We let the singular set $S = \{ P \in M_1 : \textrm{rank} (Df)  < n ~\textrm{at}~ P\}.$ Then we say that $f$ has a $k-$ type fold at $P_0$ for $P_0 \in S$ if
\begin{enumerate}
\item $\textrm{rank}(Df)|_{P_0} = n-1$,
\item $\det (Df)$ vanishes of $k$ order in the null direction at $P_0$.
\end{enumerate}
Here, the null direction means the unique direction vector $v$ such that $(D_v f)|_{P_0} =0$.
\end{defn}
\
Now we consider the two projection maps
\begin{eqnarray}\label{relation2}
\pi_L : C_{\Phi} \rightarrow T^{*} ( \mathbb{R}^n_x) \quad \textrm{and} \quad \pi_R : C_{\Phi} \rightarrow T^{*} (\mathbb{R}^n_y).
\end{eqnarray}
Then the following theorem is proved in \cite{PS} for one folds cases and in \cite{GR} for two folds cases.
\begin{thm}\label{degenerate}
Suppose that the projection maps $\pi_L$ and $\pi_R$ have one fold singularities, then 
\begin{eqnarray*}
\| T_{\lambda} f \|_{L^2(\mathbb{R}^n)} \leq C \lambda^{-\frac{(n-1)}{2}-\frac{1}{3}} \| f \|_{L^2(\mathbb{R}^n)}.
\end{eqnarray*}
If the projection maps $\phi_L$ and $\pi_R$ have two hold singularities, 
\begin{eqnarray*}
\| T_{\lambda} f \|_{L^2(\mathbb{R}^n)} \leq C \lambda^{-\frac{(n-1)}{2}-\frac{1}{4}} \| f \|_{L^2(\mathbb{R}^n)}.
\end{eqnarray*} 
\end{thm}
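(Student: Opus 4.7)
The plan is to establish both bounds by the $TT^*$ method, reducing the problem to a one-dimensional oscillatory integral along the null direction of the fold. First, I would form $T_\lambda T_\lambda^*$, whose Schwartz kernel is
\[
K_\lambda(x,x') = \int e^{i\lambda\Psi(x,x',y)} a(x,y)\overline{a(x',y)}\, dy,\qquad \Psi(x,x',y)=\phi(x,y)-\phi(x',y).
\]
Stationary phase in $y$ concentrates the kernel near the set $\{\phi_y(x,y)=\phi_y(x',y)\}$; away from this set, repeated integration by parts yields arbitrary negative powers of $\lambda$, so the analysis localizes near it.

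Next, I would translate the fold hypothesis of Definition \ref{def} into quantitative bounds on $\Psi$. The singularity of $\pi_R$ is the variety $\{\det\phi_{xy}=0\}$, and the null direction at a singular point is the kernel direction of $\phi_{xy}$. Simultaneous one-fold singularities of $\pi_L$ and $\pi_R$ permit a smooth change of variables aligning the first $y$-coordinate $t$ with the null direction, in which $\Psi_{tt}$ vanishes exactly to first order along the singular locus and $\Psi_{y'y'}$ remains non-degenerate in the remaining $n-1$ variables. Applying classical stationary phase in $y'$ produces a factor $\lambda^{-(n-1)/2}$ and leaves a one-dimensional oscillatory integral whose phase $\widetilde\Psi(t)$ satisfies $\partial_t^3\widetilde\Psi(t)\gtrsim |x-x'|$ in the one-fold case and $\partial_t^4\widetilde\Psi(t)\gtrsim |x-x'|$ in the two-fold case.

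Now I would invoke van der Corput's lemma of order $3$ (respectively $4$) to bound the residual one-dimensional integral by $\lambda^{-1/3}|x-x'|^{-1/3}$ (resp.\ $\lambda^{-1/4}|x-x'|^{-1/4}$). Combined with the stationary phase gain and the trivial bound when $|x-x'|$ is tiny, I would perform a dyadic decomposition in the scale $|x-x'|\sim 2^{-k}$ and apply the Schur test (together with an almost-orthogonality/Cotlar--Stein argument between distant scales) to each piece. The pieces contribute geometric sums which, after balancing the trivial bound against the van der Corput bound at the critical scale, yield
\[
\|T_\lambda T_\lambda^*\|_{L^2\to L^2}\lesssim \lambda^{-(n-1)-2/3}\ \text{(one fold)},\qquad \lambda^{-(n-1)-1/2}\ \text{(two folds)},
\]
and taking square roots delivers the theorem.

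The main obstacle is the derivation of the normal form for $\widetilde\Psi$ from the abstract fold hypothesis on $\pi_L,\pi_R$, since Definition \ref{def} is phrased in terms of rank and null-direction vanishing of $D\pi_{L,R}$, whereas the van der Corput step requires a uniform lower bound on specific derivatives of $\widetilde\Psi$ in a full neighborhood of the singular set. Establishing this uniformly forces a careful Melrose-style normal-form analysis of the folded Lagrangian $C_\phi$, which is the technical heart of the arguments in \cite{PS} and \cite{GR} and the precise point at which the geometric hypothesis on $C_\phi$ is converted into usable analytic input.
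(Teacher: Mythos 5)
This is a quoted result, not one the paper proves: the text states Theorem \ref{degenerate} and immediately attributes it to \cite{PS} (one-sided/two-sided folds, gain $1/3$) and \cite{GR} (the type-two case, gain $1/4$), so there is no in-paper argument to compare against. Your sketch must therefore be judged as a reconstruction of those proofs, and as such it has a genuine gap in the quantitative step.

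The problem is the numerology of your $TT^*$/van der Corput/Schur scheme. Even granting the normal form, a pointwise kernel bound of the shape $|K_\lambda(x,x')|\lesssim \lambda^{-(n-1)/2}(\lambda|x-x'|)^{-1/3}$ fed into the Schur test gives $\|T_\lambda T_\lambda^*\|\lesssim \lambda^{-(n-1)/2-1/3}$, hence $\|T_\lambda\|\lesssim\lambda^{-(n-1)/4-1/6}$, far from the claim; and even the optimistic version in which the kernel decays rapidly in the $n-1$ transverse separations yields $\|T_\lambda T_\lambda^*\|\lesssim\lambda^{-(n-1)-1/3}$, i.e.\ $\|T_\lambda\|\lesssim\lambda^{-(n-1)/2-1/6}$, which is the exponent one gets from a fold condition on one side only. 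The extra $1/6$ that upgrades $-1/6$ to $-1/3$ is exactly where the hypothesis that \emph{both} $\pi_L$ and $\pi_R$ fold must be used, and your sketch never uses the fold condition on the $\pi_L$ side. The arguments in \cite{PS} and \cite{GR} do not run through a pointwise kernel estimate: they decompose the symbol dyadically according to the distance $\delta$ to the singular variety $S$, prove $\|T_\lambda^{\delta}\|\lesssim \lambda^{-n/2}\delta^{-1/2}$ on each far piece by reducing to H\"ormander's nondegenerate theorem after rescaling, and bound the near piece by $\delta\,\lambda^{-(n-1)/2}$ --- the factor $\delta$ (rather than $\delta^{1/2}$) coming precisely from the $\delta$-localization in the bad direction in both the $x$- and $y$-variables. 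Balancing at $\delta\sim\lambda^{-1/3}$ (resp.\ $\lambda^{-1/4}$ for the type-two fold, where the determinant only vanishes to second order and the far-piece bound degrades accordingly) produces the stated exponents. Separately, you correctly flag that converting Definition \ref{def} into uniform derivative bounds is the technical heart, but deferring both that step and the two-sided mechanism leaves the proposal without the content that actually produces $\lambda^{-(n-1)/2-1/3}$ and $\lambda^{-(n-1)/2-1/4}$.
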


\

\section{Geometry of the Canonical relation maps}
In this section, we study the projection maps \eqref{relation2} associated with the phase function of our operators in Theorem \ref{LA} and Theorem \ref{LB} to use Theorem \ref{degenerate}. Recall that $\rho(x,t) = (|x|^4 + t^2)^{1/4}$ and the phase function $\phi$ of the integral operators $L_{A_j}$ and $L_{B_j}$  is 
\begin{eqnarray*}
\phi(x,t,y,s) = \rho^{-\beta} \left( (x,t)\cdot (y,s)^{-1}\right).
\end{eqnarray*}
To write the group law explicitly, we write $x = (x^1, x^2)$ and $y = (y^1, y^2)$ with $x^1, x^2, y^1, y^2 \in \mathbb{R}^n$. Let $\Phi(x,t) = \rho(x,t)^{-\beta}$ for simplicity, then we have
\begin{eqnarray}\label{phi}
\phi(x, t, y, s) = \Phi \left( x^1 - y^1, x^2 - y^2, t - s - 2a (x^1 y^2 -x^2 y^1)\right)
\end{eqnarray}
Let us use the derivatives $\partial_x, \partial_t$ to denote the position derivatives of $\Phi$ (i,e. $\partial_{x_j} \Phi = \Phi_{x_j}, \partial_t \Phi = \Phi_t ~\textrm{and}~ \partial_{x_j} (x_j \Phi) = x_j \Phi_{x_j})$.
\

Firstly, to determine the phase function $\Phi$ is non-degenerate or not, we should calculate the determinant of the matrix
\begin{eqnarray*}
H=\left( \frac{\partial^2  \phi(x,t,y,s) }{\partial_{(y,s)} \partial_{(x,t)}} \right).
\end{eqnarray*}
So, we now compute the matrix $H$. In \eqref{phi}, by the chain-rule, we have
\begin{eqnarray*}
\frac{\partial}{\partial x_j} \phi(x,t,y,s) = (\partial_{x_j} + 2 a y_{n+j} \partial_t ) \Phi |_{\left( (x,t) \cdot (y,s)^{-1}\right)}
\\
\frac{\partial}{\partial x_{j+n}} \phi(x,t,y,s) = (\partial_{x_{j+n}} - 2 a y_j \partial_t ) \Phi|_{\left( (x,t) \cdot (y,s)^{-1}\right)}
\end{eqnarray*}
Let 
\begin{eqnarray*}
A(y) = \begin{pmatrix} I & 2a J y \\ 0 & 1 \end{pmatrix} 
\end{eqnarray*}
Using a similar calculation, we see that 
\begin{eqnarray}\label{matrix}
H(x,t,y,s) &=& A(x) \left( \partial_i \partial_j \Phi \right) A(y)^t + 2 a \partial_t \Phi \left. \begin{pmatrix} J & 0 \\ 0 &0\end{pmatrix}\right|_{\left( (x,t) \cdot (y,s)^{-1}\right)}
\\
&=& A(x) \left. \left[ (\partial_i \partial_j \Phi ) + 2 a \partial_t \Phi \begin{pmatrix} J & 0 \\ 0 &0 \end{pmatrix} \right] \right|_{\left( (x,t) \cdot (y,s)^{-1}\right)}A(y)^{t}
\end{eqnarray}
where the second equality holds because {\footnotesize$A(x)  \begin{pmatrix}J&0\\0&0\end{pmatrix} A(y)^{t} = \begin{pmatrix}J&0\\0&0\end{pmatrix}$.} We let
\begin{eqnarray}\label{L1}
L(x,t,y,s) = \left. \left[ (\partial_i \partial_j \Phi ) + 2 a \partial_t \Phi \begin{pmatrix} J & 0 \\ 0 &0 \end{pmatrix} \right] \right|_{\left( (x,t) \cdot (y,s)^{-1}\right)}.
\end{eqnarray}
Thus, to study the matrix $H$, it is enough to analyze the matrix $L$. Morerover, we have  $\det(A(x)) = \det(A(y))=1$, which implies  $ \det (H(x,t,y,s)) = \det ( L(x,t,y,s)).$ In \cite{laghi}, the determinant of the matrix $H$ was calculated directly without using this observation. Here we calculate it by obtaining the determinant of $L$.

We now calculate the hessian matrix of $\Phi$. For $1 \leq i, j \leq 2n$,

\begin{eqnarray*}
\partial_j \Phi (\textrm{x}
,\textrm{t}) & = &- \frac{\beta}{4} (|\textrm{x}
|^4 + \textrm{t}^2)^{-\frac{\beta}{4} -1} ( 4 \textrm{x}
_j |\textrm{x}
|^2)
\\
\partial_t \Phi(\textrm{x}
,\textrm{t}) & = & - \frac{\beta}{4}(|\textrm{x}
|^4 + \textrm{t}^2)^{-\frac{\beta}{4}-1}(2\textrm{t})
\end{eqnarray*}
and
\begin{eqnarray*}
\partial_i \partial_j \Phi(\textrm{x}
,\textrm{t})&=& \beta(|\textrm{x}
|^4 + \textrm{t}^2)^{-\frac{\beta}{4} -2} \left[ (\beta+4) |\textrm{x}
|^4 - 2 (|\textrm{x}
|^4+ \textrm{t}^2) \right] \textrm{x}
_i \textrm{x}
_j - \beta(|\textrm{x}
|^4 + \textrm{t}^2)^{-\frac{\beta}{4}-1} \delta_{ij} |\textrm{x}
|^2
\\
\partial_i \partial_t \Phi(\textrm{x}
,\textrm{t})&=& \beta(\beta+4) (|\textrm{x}
|^4 + t^2)^{-\frac{\beta}{4}-2} |\textrm{x}
|^2  x_i \cdot \frac{\textrm{t}}{2}
\\
\partial_t^2 \Phi(\textrm{x}
,\textrm{t}) &=& \beta (\beta+4) (|\textrm{x}
|^4 + \textrm{t}^2 )^{-\frac{\beta}{4} -2}  \frac{\textrm{t}}{2} \cdot \frac{\textrm{t}}{2} - \beta (|\textrm{x}
|^4 + \textrm{t}^2)^{-\frac{\beta}{4}-1} \frac{1}{2}.
\end{eqnarray*}
Let $D = (|\textrm{x}
|^2 \textrm{x}
, \frac{\textrm{t}}{2} )^{t}$, then the above computations show that
\begin{eqnarray}\label{L2}
&\left. (\partial_i \partial_j \Phi) + 2 a \partial_t \Phi \begin{pmatrix} J & 0 \\ 0&0 \end{pmatrix} \right|_{ (\textrm{x}
,\textrm{t}) } &
\\&= \beta(\beta + 4) (|\textrm{x}
|^4 + \textrm{t}^2 )^{-\frac{\beta}{4} -2} D \cdot D^{t} - &\beta (|\textrm{x}
|^4 + \textrm{t}^2)^{-\frac{\beta}{4} -1} \begin{pmatrix} |\textrm{x}
|^2 I + a\textrm{t} J + 2 \textrm{x}
 \cdot \textrm{x}
^{t} & 0 \\ 0 & \frac{1}{2} \end{pmatrix}
\\&=-\beta (|\textrm{x}
|^4 + \textrm{t}^2 )^{-\frac{\beta}{4}-1} ( E + R )
\end{eqnarray}
To write the matrices simply, we let $B = |\textrm{x}|^2 I + a \textrm{t}J$, $K = \textrm{x}\cdot \textrm{x}^t$ and
\begin{eqnarray*}
E =\begin{pmatrix} B+2K  & 0 \\ 0 & \frac{1}{2}\end{pmatrix} \quad\textrm{and} \quad R= - \frac{(\beta+4)}{|\textrm{x}
|^4 + \textrm{t}^2} D\cdot D^{t}.
\end{eqnarray*}
Then, in \eqref{L1} and \eqref{L2}, we have
\begin{eqnarray}\label{sowe}
L(x,t,y,s) = [- \beta(|\textrm{x}
|^4 + \textrm{t}^2 )^{-\frac{\beta}{4}-1} (E + R)]_{(\textrm{x}
,\textrm{t}) = (x,t)\cdot (y,s)^{-1}}
\end{eqnarray}
Now we are ready to obtain the following Lemma
\begin{lem}\label{det} We have
\begin{eqnarray*}
\det H (x,t,y,s) = F( (x,t) \cdot (y,s)^{-1})
\end{eqnarray*}
where $F(x,t) = c_{a,\beta} (|x|^4 + a^2 t^2)^{m_1} (|x|^4 + t^2 )^{m_2} f (x,t)$  for some $m_1,m_2, c_{a,\beta} \in \mathbb{R}$ and  $f(x,t) = 2(\beta+1)|x|^8 + (3(\beta+2) - 2a^2)|x|^4 t^2 + (\beta+2) a^2 t^4$.
\end{lem}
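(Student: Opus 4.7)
I would compute $\det H$ by exploiting the factorization $H = A(x) L A(y)^{t}$ from \eqref{matrix}. Since $A(x),A(y)$ are unit upper-triangular, $\det A(x) = \det A(y) = 1$ and thus $\det H = \det L$. By \eqref{sowe}, $L$ is the scalar $-\beta(|x|^{4}+t^{2})^{-\beta/4-1}$ times $E+R$, both evaluated at $(x,t)\cdot(y,s)^{-1}$. Pulling this scalar out of a $(2n{+}1)\times(2n{+}1)$ determinant contributes $(-\beta)^{2n+1}(|x|^{4}+t^{2})^{-(\beta/4+1)(2n+1)}$, so everything reduces to computing $\det(E+R)$ as a polynomial in the evaluated variables.

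\textbf{Key reductions.} Since $R = -\tfrac{\beta+4}{|x|^{4}+t^{2}}\,D D^{t}$ has rank one, the matrix determinant lemma yields
\[
\det(E+R) = \det E \cdot \Bigl(1 - \tfrac{\beta+4}{|x|^{4}+t^{2}}\, D^{t} E^{-1} D\Bigr).
\]
Using the block structure of $E$, $\det E = \tfrac12 \det(B+2K)$ and $D^{t}E^{-1}D = |x|^{4}\,x^{t}(B+2K)^{-1}x + t^{2}/2$. To handle $B+2K = |x|^{2} I + atJ + 2xx^{t}$, I would apply the matrix determinant lemma and Sherman--Morrison to the rank-one perturbation $2xx^{t}$, reducing everything to the determinant and inverse of $B = |x|^{2} I + atJ$. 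These are immediate from $J^{2} = -I$: the eigenvalues of $B$ are $|x|^{2}\pm iat$ each with multiplicity $n$, so $\det B = (|x|^{4}+a^{2}t^{2})^{n}$ and $B^{-1} = (|x|^{4}+a^{2}t^{2})^{-1}(|x|^{2}I - atJ)$. The essential algebraic simplification is the skew-symmetry identity $x^{t}Jx = 0$, which collapses $x^{t}B^{-1}x$ to $|x|^{4}/(|x|^{4}+a^{2}t^{2})$ and, after Sherman--Morrison, gives $x^{t}(B+2K)^{-1}x = |x|^{4}/(3|x|^{4}+a^{2}t^{2})$ and $\det(B+2K) = (|x|^{4}+a^{2}t^{2})^{n-1}(3|x|^{4}+a^{2}t^{2})$.

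\textbf{Assembling and main obstacle.} Substituting these back, the correction factor $1 - \tfrac{\beta+4}{|x|^{4}+t^{2}}D^{t}E^{-1}D$ becomes a rational expression whose numerator, after clearing to a common denominator of $2(|x|^{4}+t^{2})(3|x|^{4}+a^{2}t^{2})$, should match exactly $-f(x,t)$ upon term-by-term comparison of the monomials $|x|^{8}$, $|x|^{4}t^{2}$ and $t^{4}$; the spurious factor $(3|x|^{4}+a^{2}t^{2})$ then cancels against the one in $\det(B+2K)$. Combining with the scalar pulled out in the first step produces the claimed form of $F$ with $m_{1} = n-1$, $m_{2} = -(\beta/4+1)(2n+1) - 1$, and an explicit constant $c_{a,\beta}$. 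The main obstacle is exactly this final algebraic combination: three rational terms with different denominators must collapse to precisely $f$ with its coefficients $2(\beta+1)$, $3(\beta+2)-2a^{2}$, $(\beta+2)a^{2}$, so I would keep $(|x|^{4}+a^{2}t^{2})$ and $(|x|^{4}+t^{2})$ as symbolic factors throughout and only expand the coefficient polynomial at the very end.
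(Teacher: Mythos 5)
Your proposal is correct and takes essentially the same route as the paper: both reduce $\det H$ to $\det L$ using $\det A(x)=\det A(y)=1$ and then treat $R$ and $2K=2xx^{t}$ as rank-one perturbations of the block matrix built from $B=|x|^{2}I+atJ$, arriving at the same intermediate quantities $\det(B+2K)=(|x|^{4}+a^{2}t^{2})^{n-1}(3|x|^{4}+a^{2}t^{2})$ and the same final numerator $-f(x,t)$ after the factor $(3|x|^{4}+a^{2}t^{2})$ cancels. The only difference is cosmetic: the paper expands the rank-one perturbations row by row via multilinearity of the determinant, whereas you package the identical algebra through the matrix determinant lemma and Sherman--Morrison (which additionally requires noting that $E$ is invertible on the support where $\rho\sim 1$, as your formula for $\det(B+2K)$ shows).
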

\begin{proof} 
From \eqref{matrix} and \eqref{sowe}, it is enought to show that
\begin{eqnarray*}
\det [-\beta(|\textrm{x}|^4+ \textrm{t}^2)^{-\frac{\beta}{4}-1} (E + R) ] = F (\textrm{x},\textrm{t}).
\end{eqnarray*}
Moreover, from the form of given $F$ in the theorem, we only need to compute $\det (E+R)$.
\
We recall the form
\begin{eqnarray*}
E + R = \begin{pmatrix} B + 2 K & 0 \\ 0 & \frac{1}{2} \end{pmatrix} - \frac{(\beta+4)}{|x|^4 + t^2} D \cdot D^{t}.
\end{eqnarray*}
\
For notational convenience, we always use case-letter $f_i$ to denote the $i'$th row of matrix capital $F$. 
Notice that $D \cdot D^{t} $ is of rank $1$ and will exploit the following convention 
\begin{eqnarray}\label{rank}
\det ( P + Q) = \det(P) + \sum_{j=1}^{m} \det \begin{pmatrix} p_1 \\ \vdots \\ p_{j-1} \\ q_j \\ p_{j+1} \\ \vdots \\ p_{m}\end{pmatrix} 
\end{eqnarray}
for any $m \times m$ matrix $P$ and $Q$ with rank $Q =1$. Recall $B = |x|^2 + at J$ and $K=x\cdot x^t$, then direct calculations show that  
\begin{eqnarray}\label{detB}
\det (B) = (|x|^4 + a^2 t^2)^n
\end{eqnarray}
and 
\begin{eqnarray}\label{detK}
\sum_{j=1}^{n} x_j \det \begin{pmatrix} b_1 \\ \vdots \\ b_{j-1} \\ k_j \\ b_{j+1} \\ \vdots \\ b_{2n}\end{pmatrix} 
 + \sum_{j=1}^{n} x_{j+n} \det \begin{pmatrix} b_1 \\ \vdots \\ b_{j+n-1} \\ k_{j+n} \\ b_{j+n+1} \\ \vdots \\ b_{2n}\end{pmatrix}  
\\
= \sum_{j=1}^{n} x_j ( |x|^2 x_j + x_{n+j} a t) (|x|^4 + a^2 t^2)^{n-1} + \sum_{j=1}^{n} x_{j+n} &(|x|^2 x_{j+n} - x_j a t ) (|x|^4 + a^2 t^2)^{n-1}
\\
= (|x|^4 + a^2 t^2)^{n-1} |x|^4
\end{eqnarray}
Thus, from \eqref{rank}, \eqref{detB} and \eqref{detK}, we have
\begin{eqnarray}\label{det2}
\det (B+2K) &=& (|x|^4 + a^2 t^2)^{n} + 2 |x|^4 (|x|^4 +a^2 t^2)^{n-1} 
\\
&=& (|x|^4+a^2 t^2)^{n-1}(3|x|^4 + a^2 t^2)
\end{eqnarray}
Using  \eqref{rank} again, we have 
\begin{eqnarray*}
\det(E+R) &=& \det(E) + \frac{1}{2} \sum_{j=1}^{2n} \det \begin{pmatrix} e_1 \\ \vdots \\ e_{j-1} \\ r_j \\ e_{j+1} \\ \vdots \\ e_{2n} \end{pmatrix} 
+ \det \begin{pmatrix} e_1 \\ \vdots \\ e_{2n} \\ r_{2n+1} \end{pmatrix}
\\
&=:& S_1 + S_2 + S_3 
\end{eqnarray*}
From  \eqref{det2}
\begin{eqnarray*}
S_1 = \det \begin{pmatrix} B + 2K & 0 \\ 0& \frac{1}{2}\end{pmatrix} = \frac{1}{2} \det ( B + 2K)  = \frac{1}{2} (|x|^4 + a^2 t^2)^{n-1} (3|x|^4 + a^2 t^2)
\end{eqnarray*}
Since $\textrm{rank} ~K =1$, we see that
\begin{eqnarray*}
\det \begin{pmatrix} e_1 \\ \vdots \\ e_{j-1} \\ r_j \\ e_{j+1} \\ \vdots \\ e_{2n} \end{pmatrix} = \det \begin{pmatrix} b_1 + 2 k_1 \\ \vdots \\ b_{j-1} + 2 k_{j-1} \\ \frac{-(\beta+4)|x|^4}{|x|^4 +t^2} k_j \\ b_{j+1} + 2 k_{j+1} \\ \vdots \\ b_{2n} + 2 k_{2n} \end{pmatrix} = - \frac{(\beta+4)|x|^4}{|x|^4 + t^2}  \det  \begin{pmatrix} b_1 \\ \vdots \\ b_{j-1} \\ k_j \\ b_{j+1} \\ \vdots \\ b_{2n} \end{pmatrix}. 
\end{eqnarray*}
So, 
\begin{eqnarray*}
S_2 &=& - \frac{1}{2} \left(\frac{(\beta+4)|x|^4}{|x|^4 + t^2} \right) |x|^4 (|x|^4 + a^2 t^2)^{n-1}
\end{eqnarray*}
Finally, 
\begin{eqnarray*}
S_3 &=& \det \begin{pmatrix} E + 2K & 0 \\ * & - \frac{\beta+4}{|x|^4 + t^2}\frac{t^2}{4} \end{pmatrix} = - \frac{\beta+4}{|x|^4+t^2} \frac{t^2}{4} \det (B+2K) 
\\
&=& -\frac{\beta+4}{|x|^4 + t^2} \frac{t^2}{4} (|x|^4 +a^2 t^2)^{n-1} (3|x|^4 +a^2 t^2)
\end{eqnarray*}
Adding all these terms, we have
\begin{eqnarray*}
\det \left( (\partial_i \partial_j \Phi) (x,t) + 2a \partial_t \Phi(x,t) \begin{pmatrix} J & 0 \\ 0 & 0 \end{pmatrix} \right) = p (|x|^4 + a^2 t^2) q (|x|^4 + t^2) f(x,t)
\end{eqnarray*}
where $p(r) = c_p r^{m_1}$, $q(r) = r^{m_2}$ for some $m_1, m_2, c_p \in \mathbb{R}$ and 
\begin{eqnarray*}
f(x,t) = 2 (\beta+1) |x|^8 + ( 3 (\beta+2) - 2a^2) |x|^4 t^2 + (\beta+2)a^2 t^4.
\end{eqnarray*} 
\end{proof}
Now, we should determine when the determinant of $H(x,t,y,s)$ can be zero for some values $(x,t,y,s)$ with $\rho\left( (x,t)\cdot (y,s)^{-1}\right) \sim 1$. Furthermore, to determin the folding type in the degenerate case, it will be decisive to know the shape of factorization. We take it in the following theorem
\begin{lem}\label{factor} For some nonzero constants $\gamma, c,c_1,c_2,c_3$ with $c_1 \neq c_2$ and $c_3 >0$ which are determined by $\beta$ and $a$, 
\

\begin{itemize}
\item case 1 : $\beta \in (-1,0) \cup (0,\infty)$
\begin{itemize}
\item[$\cdot$] For $\frac{a^2}{b^2} < C_{\beta}$, $f(x,t) > 0$.
\item[$\cdot$] For $\frac{a^2}{b^2} = C_{\beta}$, $f(x,t) = \gamma ( |x|^2 - c t^2)^2$.
\item[$\cdot$] For $\frac{a^2}{b^2} > C_{\beta}$, $f(x,t) = \gamma ( |x|^2 - c_1 t)(|x|^2 + c_1 t)(|x|^2 - c_2 t)(|x|^2 + c_2 t)$.
\end{itemize}

\item case 2 : $\beta \in (-2,-1)$
\begin{itemize}
\item[$\cdot$] $f(x,t) = \gamma (|x|^2- c_1 t) (|x|^2 + c_1 t) (|x|^4 + c_3 t^2)$.
\end{itemize}
\item case 3 : $\beta \in (\infty, -2)$
\begin{itemize}
\item[$\cdot$] $ f(x,t) < 0$.
\end{itemize}
\end{itemize}
\end{lem}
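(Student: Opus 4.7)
The plan is to treat $f(x,t)$ as a binary quadratic form in the pair $(u,v) = (|x|^4, t^2)$, namely $f = Au^2 + Buv + Cv^2$ with
\begin{align*}
A = 2(\beta+1),\qquad B = 3(\beta+2) - 2a^2,\qquad C = (\beta+2)a^2.
\end{align*}
Every factor of the form $(|x|^2 \pm c t)$ or $(|x|^4 + c t^2)$ appearing in the lemma is a real factor of the corresponding quadratic in $(u,v)$, so the whole problem reduces to analyzing the discriminant $D(a^2) = B^2 - 4AC$. A direct expansion gives $D(a^2) = 4(a^2)^2 - 4(\beta+2)(2\beta+5)\,a^2 + 9(\beta+2)^2$, and using the identity $(2\beta+5)^2 - 9 = 4(\beta+1)(\beta+4)$ the discriminant of $D$ (in the variable $a^2$) is $64(\beta+2)^2(\beta+1)(\beta+4)$. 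Thus the sign of $(\beta+1)(\beta+4)$ triggers the three cases.

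For case 1 ($\beta > -1$, $\beta \ne 0$), $D(a^2)$ has two positive real roots $C'_\beta < C_\beta$, and the larger root matches the constant defined in the introduction via the quadratic formula. By Vieta the product of the $u$-roots of $f$ is $(\beta+2)a^2/[2(\beta+1)] > 0$, while their sum has the same sign as $2a^2 - 3(\beta+2)$; a short comparison shows $C'_\beta < 3(\beta+2)/2 < C_\beta$. Combining: for $a^2 < C_\beta$ the $u$-roots of $f$ are either complex conjugate or both negative, hence $f > 0$; at $a^2 = C_\beta$ one gets a double \emph{positive} root $c > 0$ and $f = 2(\beta+1)(|x|^4 - c t^2)^2$; for $a^2 > C_\beta$ the two roots are distinct positive numbers $c_1^2 \ne c_2^2$, producing the four-factor form $\gamma \prod (|x|^2 \pm c_i t)$ with $\gamma = 2(\beta+1) > 0$.

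For case 2 ($-2 < \beta < -1$), $(\beta+1)(\beta+4) < 0$ so $D(a^2) > 0$ for every $a^2$ and $f$ always factors over $\mathbb{R}$. Since $A < 0$ and $C > 0$, the ratio $C/A < 0$, so the two $u$-roots of $f$ have opposite signs, yielding one positive root $c_1^2$ and one negative root $-c_3$, and thus $f = \gamma (|x|^4 - c_1^2 t^2)(|x|^4 + c_3 t^2)$ with $\gamma = 2(\beta+1) < 0$. For case 3 ($\beta < -2$), one has $A < 0$ and $C/A > 0$ (both $\beta+1, \beta+2$ being negative). If $D < 0$, then $f$ does not factor over $\mathbb{R}$ and has constant sign equal to that of $A$, so $f < 0$. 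If $D \ge 0$, the sum of the $u$-roots is $(2a^2 - 3(\beta+2))/[2(\beta+1)]$, with positive numerator and negative denominator, hence both $u$-roots are negative; writing $f = A(u + |r_1|v)(u + |r_2|v)$ with $A<0$ and the remaining factors positive, one gets $f < 0$ as claimed.

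The main obstacle is the sign bookkeeping: the answer in each case depends simultaneously on the sign of $A$, on whether $D \gtrless 0$, and on the sign of the (possibly complex) $u$-roots of $f$. In particular, the asymmetry between $C_\beta$ (where the double root of $f$ is \emph{positive}, giving real zeros of $f$) and its companion $C'_\beta$ (where the double root is negative and $f$ has no real zero) is the only reason the trichotomy in case 1 is stated using $C_\beta$ alone; this uses the comparison $C'_\beta < 3(\beta+2)/2 < C_\beta$, which after squaring reduces to the trivial inequality $1 < 4$ and is the one nontrivial algebraic step in the argument.
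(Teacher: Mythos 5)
Your proposal is correct and follows the same basic route as the paper: view $f$ as a quadratic form in $(|x|^4,t^2)$ and analyze the discriminant $\Delta=4a^4-4(\beta+2)(2\beta+5)a^2+9(\beta+2)^2$, whose roots in $a^2$ are $C_\beta^{\pm}$, together with the comparison $C_\beta^{-}<\tfrac{3(\beta+2)}{2}<C_\beta^{+}$. The difference is one of completeness rather than of method: the paper's written proof stops after establishing the first bullet of case 1 (positivity of $f$ for $a^2<C_\beta$) and leaves the factorizations at $a^2=C_\beta$, for $a^2>C_\beta$, and in cases 2 and 3 unproved, whereas your Vieta-based sign bookkeeping (product of roots $C/A$, sum of roots $-B/A$, sign of the leading coefficient $2(\beta+1)$) actually derives all five assertions, including the determination of which of the two roots $C_\beta^{\pm}$ governs the trichotomy. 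Your argument also implicitly corrects a typo in the statement: for degree reasons the double-root case must read $\gamma(|x|^4-ct^2)^2$, i.e.\ $\gamma(|x|^2-\sqrt{c}\,t)^2(|x|^2+\sqrt{c}\,t)^2$, rather than $\gamma(|x|^2-ct^2)^2$, which is what your computation produces.
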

\begin{proof}

Firstly, we see that $f(x,t) >0$ for $3(\beta +2) - 2a^2 > 0$. And $f(x,t) >0 $ also holds if the dicriminant 
\begin{eqnarray*}
\Delta = 4a^{4} - 4 (\beta+2) (2\beta + 5) a^2 + 9 (\beta +2)^2
\end{eqnarray*}
is negative. This range is equal to
\begin{eqnarray*}
C_{\beta}^{-} < a^2 < C_{\beta}^{+}
\end{eqnarray*} 
where
\begin{eqnarray*}
C_{\beta}^{\pm} = \frac{\beta +2}{2} \left(2\beta +5 \pm \sqrt{(2\beta  + 5)^2 - 9 }\right) .
\end{eqnarray*}
But, 
\begin{eqnarray*}
C_{\beta}^{-} = \frac{(\beta+2)}{2} (2\beta + 5 - \sqrt{(2\beta +5)^2 -9}) & = & \frac{(\beta+2)}{2} (2\beta + 5 - \sqrt{(2\beta+2)(2\beta+8)}
\\
& < & \frac{(\beta+2)}{2} (2 \beta+5 - \sqrt{(2\beta+2)^2}) = \frac{3 (\beta+2)}{2}
\end{eqnarray*}
So, we compress two conditions as $f(x,t) >0$ for $a^2 < C_{\beta}^{+}$.

\end{proof}

For degenerate cases, we need to analyze the canonical relation \eqref{relation1} associated with our phase function $\Phi$
\begin{eqnarray*}
C_{\Phi} = \{ \left( (x,t) , \Phi_{(x,t)}, (y,s), - \Phi_{(y,s)} \right) \} \subset T^{*} (\mathbb{R}^{2n+1}) \times T^{*} (\mathbb{R}^{2n+1})
\end{eqnarray*}
and the projection maps \eqref{relation2}
\begin{eqnarray*}
\pi_L : C_{\Phi} \rightarrow T^{*} (\mathbb{R}^{2n+1}) \quad \textrm{and} \quad \pi_R : C_{\Phi} \rightarrow T^{*}(\mathbb{R}^{2n+1}).
\end{eqnarray*}
We will check the condition (1) and (2) of Definition \ref{def} to prove the following theorem
\begin{thm}\label{folding} On the hypersurface $S$, 
\begin{enumerate}
\item If $\beta \in (-2,-1)$ or  $\beta \in (-1,0) \cup (0,\infty)$ and $\frac{a^2}{b^2} > C_{\beta}$, the projection maps $\pi_L$ and $\pi_R$ are both of folding type 1. 
\item If $\beta \in (-1,0) \cup (0, \infty)$ and $\frac{a^2}{b^2} = C_{\beta}$, the projection maps $\pi_L$ and $\pi_R$ are both of folding type 2.
\end{enumerate}
\end{thm}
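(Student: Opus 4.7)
My strategy is to verify the two conditions of Definition \ref{def} for $\pi_L$ and $\pi_R$ case by case, using the explicit description of $H$ developed in Section 4. Taking $(x,t,y,s)$ as local coordinates on $C_\Phi$, the Jacobian of $\pi_L$ has the block form
\[
D\pi_L = \begin{pmatrix} I_{2n+1} & 0 \\ \Phi_{(x,t)(x,t)} & \Phi_{(x,t)(y,s)} \end{pmatrix},
\]
so $\det D\pi_L = \det H$; an analogous block computation gives $\det D\pi_R = \pm \det H$. Hence the two singular sets coincide with $S = \{\det H = 0\}$, and the null space of $D\pi_L$ is $\{0\}\times\ker H \subset T C_\Phi$ (symmetrically for $\pi_R$). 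Both folding conditions therefore reduce to the linear algebra of $H$ and to the order of vanishing of $\det H$ along the null direction.

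By $H = A(x)\,L\,A(y)^t$ with $\det A(x) = \det A(y) = 1$, one has $\ker H = (A(y)^t)^{-1}\ker L$ and $\det H = \det L$. Since $L$ equals the nonvanishing scalar $-\beta(|\mathbf{x}|^4+\mathbf{t}^2)^{-\beta/4-1}$ times $(E+R)$ evaluated at $\mathbf{g} = (x,t)\cdot(y,s)^{-1}$, Lemma \ref{det} identifies $S$ with the pullback of $\{f=0\}$ under the diffeomorphism $\mathbf{g}$. Lemma \ref{factor} then tells us that at generic points of $S$, in cases $\beta\in(-2,-1)$ and $a^2 > C_\beta$, the defining factor of $f$ vanishes simply, while in the critical case $a^2 = C_\beta$ we have $f = \gamma h^2$ with $h$ vanishing simply.

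The first task is to verify corank $1$ by explicitly solving $(E+R)\xi = 0$. Writing $\xi = (\xi',\xi_{2n+1})$ and using the block form of $E$ together with the rank-one perturbation $-(\beta+4)(|\mathbf{x}|^4+\mathbf{t}^2)^{-1}D D^t$ with $D = (|\mathbf{x}|^2\mathbf{x},\mathbf{t}/2)^t$, the system $(E+R)\xi = 0$ reduces, via the explicit inverse of $B = |\mathbf{x}|^2 I + a\mathbf{t}J$, to a scalar equation whose solvability is governed precisely by the vanishing factor of $f$. A direct computation then produces, up to scalar, a unique null vector $\xi_0 \in \ker(E+R)$, while the nonvanishing of the remaining factors of $f$ prevents any additional kernel, establishing corank $1$. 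For the order of vanishing, write $\det H = G(\mathbf{g})\,f(\mathbf{g})$ with $G$ smooth and nonvanishing on $\rho(\mathbf{g})\sim 1$: in the simple-zero cases, one needs $\partial_\nu f(\mathbf{g}) \neq 0$ for $\nu = (0,A(y)^{-t}\xi_0)$, and in the double-zero case, two derivatives along the null direction produce $2\gamma(D_\nu h)^2$ at the singular point, so one needs $D_\nu h \neq 0$.

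The main obstacle is this last nondegeneracy check. Substituting the explicit null vector $\xi_0$ and applying the chain rule through the group law $\mathbf{g}(x,t,y,s)$, the quantity $D_\nu h$ becomes a polynomial expression in $(\mathbf{x},\mathbf{t})$ that must be shown not to vanish identically on the branch of $S$ under consideration. The key point is that the coefficients of $\xi_0$ involve the \emph{other} (nonvanishing) factors of $f$; substituting these into the derivative of $h$ produces cross-terms that do not collapse on $S$, provided one restricts to the regular stratum away from intersections of several branches. The analysis is carried out branch by branch for each case listed in Lemma \ref{factor}, and the analogous conclusions for $\pi_R$ follow from the symmetry between $(x,t)$ and $(y,s)$ under the group inversion.
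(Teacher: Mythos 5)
Your setup agrees with the paper's: you parametrize $C_\Phi$ by $(x,t,y,s)$, observe that $\det D\pi_L=\det H$ so that the singular set is $S=\{\det H=0\}$ with null direction $(0,\zeta)$, $H\zeta=0$, reduce to $L=E+R$ via $H=A(x)LA(y)^t$, and correctly identify what must be checked in each case ($D_\nu f\neq 0$ for a simple zero, $D_\nu h\neq 0$ when $f=\gamma h^2$). The problem is that you never actually establish this nondegeneracy, and it is the entire content of the theorem. Your final paragraph concedes it is ``the main obstacle'' and then replaces the verification with the assertion that substituting $\xi_0$ ``produces cross-terms that do not collapse on $S$.'' That is not an argument: a priori the null direction of $d\pi_L$ could perfectly well be tangent to $S$ (this is exactly what happens for blowdown-type singularities, where the operator decays more slowly), so the transversality has to be proved, not presumed. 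As written, the proposal proves corank $\leq 1$ and sets up the fold criterion, but leaves condition (2) of Definition \ref{def} unverified.

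For comparison, the paper closes this gap without ever computing the null vector explicitly. It argues by contradiction: if the null vector $v_L=(0,0,z,w)$ were orthogonal to the gradient of the defining function of the branch $S_1=\{|x-y|^2-c_1(s-t+2ax^tJy)=0\}$, then the orthogonality relation forces the last entry of $A(x)^t(z,w)^t$ to equal $2(x-y)\cdot z/c_{\beta,1}$; feeding this into the bottom row of the explicit rank-one-plus-block form of $L$ and using the relation $t-s+2ax^tJy=|x-y|^2/c_{\beta,1}$ valid on $S_1$ yields $(x-y)\cdot z=0$, so the last entry of $A(x)^t(z,w)^t$ vanishes and the first $2n$ entries lie in the kernel of the upper-left $2n\times 2n$ block $L_1$ --- contradicting Lemma \ref{rank}, which shows $\det L_1\neq 0$ on $S$ (and which is also what gives corank exactly $1$, making your proposed explicit inversion of $B$ unnecessary). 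Some argument of this kind, exploiting the specific algebraic structure of $L$ on each branch, is indispensable; a useful side remark you could have made instead of the hedge about ``generic points'' is that the distinct branches $|x|^2=\pm c_1t$, $|x|^2=\pm c_2t$ cannot intersect on the region $\rho\sim 1$, so there is no singular stratum to avoid.
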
 
Let
\begin{eqnarray*}
S = \{ (x,t,y,s) : \det H (x,t,y,s) =0 \}.
\end{eqnarray*}
We need the following lemma to show that rank of $\pi_L$ and $\pi_R$ drop by  1 simply.

\begin{lem}\label{rank} Let $L_1(x,t,y,s)$ be the first $(2n) \times (2n)$matrix of $L(x,t,y,s)$ and suppose that $\left( x,t,y,s\right)$ is contained in $S$. Then, 
\begin{eqnarray*}
\det L_1 (x,t,y,s) \neq 0
\end{eqnarray*}
provided $\beta \neq -4$.
\end{lem}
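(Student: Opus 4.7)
The plan is to extract the upper-left $(2n) \times (2n)$ block $L_1$ as a rank-one perturbation of $B + 2K$ (already analyzed in Lemma \ref{det}), invoke the matrix determinant lemma, and reduce the non-vanishing of $\det L_1$ on $S$ to a short algebraic identity that isolates the factor $(\beta+4)$.

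First, setting $(\mathrm{x},\mathrm{t}) = (x,t)\cdot (y,s)^{-1}$, from \eqref{L1} and \eqref{L2} the upper-left block reads
\[
L_1 \;=\; -\beta\,(|\mathrm{x}|^4 + \mathrm{t}^2)^{-\beta/4 - 1}\bigl(B + \lambda K\bigr),
\qquad
\lambda \;=\; 2 - \frac{(\beta+4)\,|\mathrm{x}|^4}{|\mathrm{x}|^4 + \mathrm{t}^2},
\]
since $R = -\frac{\beta+4}{|\mathrm{x}|^4 + \mathrm{t}^2} D D^t$ contributes only the rank-one piece $-\frac{(\beta+4)|\mathrm{x}|^4}{|\mathrm{x}|^4 + \mathrm{t}^2}K$ to this block. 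Because $K = \mathrm{x}\mathrm{x}^t$ has rank one and $\det B = (|\mathrm{x}|^4 + a^2\mathrm{t}^2)^n$ is nonzero on the support (where $\rho \sim 1$, $a\neq 0$), the matrix determinant lemma yields $\det(B + \lambda K) = \det(B)(1 + \lambda\,\mathrm{x}^t B^{-1}\mathrm{x})$. The scalar $\mathrm{x}^t B^{-1}\mathrm{x}$ can be read off by comparing this formula at $\lambda = 2$ with the value $\det(B+2K) = (|\mathrm{x}|^4+a^2\mathrm{t}^2)^{n-1}(3|\mathrm{x}|^4+a^2\mathrm{t}^2)$ from \eqref{det2}, giving $\mathrm{x}^t B^{-1}\mathrm{x} = |\mathrm{x}|^4/(|\mathrm{x}|^4 + a^2\mathrm{t}^2)$.

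Plugging back and simplifying, $\det L_1$ equals a nonvanishing scalar (depending on $\beta$ and $\rho$) times $(|\mathrm{x}|^4 + a^2\mathrm{t}^2)^{n-1}$ times
\[
g(\mathrm{x},\mathrm{t}) \;:=\; -(\beta+1)|\mathrm{x}|^8 + (3+a^2)|\mathrm{x}|^4\mathrm{t}^2 + a^2\mathrm{t}^4.
\]
It remains to show that $g$ cannot vanish simultaneously with $f$. I would establish the identity
\[
2g(\mathrm{x},\mathrm{t}) + f(\mathrm{x},\mathrm{t}) \;=\; (\beta+4)\,\mathrm{t}^2\bigl(3|\mathrm{x}|^4 + a^2\mathrm{t}^2\bigr),
\]
by matching coefficients of $|\mathrm{x}|^8$, $|\mathrm{x}|^4\mathrm{t}^2$, $\mathrm{t}^4$ in the two polynomials (the $|\mathrm{x}|^8$ terms cancel, and the other two combine cleanly with common factor $(\beta+4)$). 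On $S$ this forces $g = \tfrac{\beta+4}{2}\mathrm{t}^2(3|\mathrm{x}|^4 + a^2\mathrm{t}^2)$, which is nonzero whenever $\beta \neq -4$, $a\neq 0$, and $\mathrm{t}\neq 0$. The residual case $\mathrm{t}=0$ with $\mathrm{x}\neq 0$ on $S$ would force $\beta = -1$ from $f=0$, a value outside the regime in which Theorem \ref{folding} applies the lemma.

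The main obstacle, and essentially the only nonobvious point, is spotting the identity $2g + f = (\beta+4)\mathrm{t}^2(3|\mathrm{x}|^4 + a^2\mathrm{t}^2)$; once this cancellation is in hand, the conclusion is immediate and $(\beta+4)$ is visibly the only obstruction.
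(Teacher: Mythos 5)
Your proposal is correct and follows essentially the same route as the paper: both reduce $\det L_1$ to the polynomial $g(\mathrm{x},\mathrm{t})=-(\beta+1)|\mathrm{x}|^8+(3+a^2)|\mathrm{x}|^4\mathrm{t}^2+a^2\mathrm{t}^4$ via the rank-one structure of $K$, and then use the combination $2g+f=(\beta+4)\mathrm{t}^2(3|\mathrm{x}|^4+a^2\mathrm{t}^2)$ on $S$ to isolate the factor $(\beta+4)$. The only differences are cosmetic (matrix determinant lemma versus the paper's row expansion) plus your slightly more careful treatment of the residual case $\mathrm{t}=0$, where the paper tacitly assumes $\beta\neq -1$.
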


\begin{proof}
For simplicity, let $(z,w) := (x,t)\cdot (y,s)^{-1}$. Except the nonzero common facts,  we only need to check the determinant of 
\begin{eqnarray*}
M(z,w)=  \begin{pmatrix} |z|^2 I + a w J + 2 z \cdot z^t  - ( \beta+4) \frac{|z|^4}{|z|^4 + w^2 } x \cdot z^t \end{pmatrix}
\end{eqnarray*}
is nonzero for $(z,w) \neq (0,0)$. It can be calculated in the same way using \eqref{det1} and \eqref{det2} so that
\begin{eqnarray*}
\det(M(z,w)) = (|z|^4 + a^2 w^2)^n + (|z|^4 + a^2 w^2 )^{n-1} |z|^4 \left( 2 - (\beta+4) \frac{|z|^4}{|z|^4 + w^2} \right)
\\
= \frac{(|z|^4+a^2 w^2)^{n-1}}{|z|^4 + w^2} \left[  - (\beta+1) |z|^8 + (a^2 + 3) |z|^4 w^2 + a^2 w^4 \right] 
\end{eqnarray*}
But, $(z,w)$ is on $S$ and satisfies
\begin{eqnarray}\label{zero}
2(\beta+1)|z|^8 + (3 (\beta+2) - 2a^2) |z|^4 w^2 + (\beta+2) a^2 w^4 = 0 
\end{eqnarray} 
From above two equalites, we have
\begin{eqnarray*}
 (3(\beta+2) + 6) |z|^4 w^2 + (\beta +4) a^2 w^4 = 0 
\end{eqnarray*}
So
\begin{eqnarray*}
\det(M(z,w)) =  \frac{(|z|^4+a^2 w^2)^{n-1}}{|z|^4 + w^2} \frac{w^2}{2}  (\beta+4)\left[ 3|z|^4 + a^2 w^2 \right]
\end{eqnarray*}
If $w=0$, then $z$ becomes zero in \eqref{zero}. Because $(z,w) \neq (0,0)$, $w$ should be nonzero. Thus $\det (M(z,w)) \neq 0$.

\end{proof}

\begin{proof}[proof of Theorem \ref{folding}]
We will only consider the case (1) in the theorem. The other case can be derived similrary. It is only remained to show that at the hypersurface $S$, we have the folding type conditions as in the theorem. Recall that
\begin{eqnarray*}
S &= &\{ (x,t,y,s) \in \mathbb{R}^{(2(2n+1)} ~|~ F\left((x,t)\cdot (y,s)^{-1}\right) = F\left( x-y, s-t + 2a x^t J y\right) =0 , \quad \rho\left( (x,t)\cdot (y,s)^{-1}\right) \sim 1 \}
\\
&=& \{ (x,t,y,s) \in \mathbb{R}^{2(2n+1)} ~ | ~ f \left( x-y, s- t + 2ax^t J y\right) = 0, \quad \rho\left( (x,t)\cdot (y,s)^{-1} \right) \sim 1 \}.
\end{eqnarray*}
From Theorem \ref{factor}, we have
\begin{eqnarray*}
f(x,t) = \gamma (|x|^2 - c_1 t ) (|x|^2 + c_1 t) (|x|^2 - c_2 t )(|x|^2 + c_2 t).
\end{eqnarray*}
We need to show that at each point $P_0 \in S$, $\det (Df)$ vanishes of 1 order in each null directions of $d\pi_L$ and $d\pi_R$ at $P_0$.
 Let $P_0 = (x_0, t_0, y_0, s_0)$ and we may assume that $P_0$ is contained in 
\begin{eqnarray*}
S_1 =: \{ (x,t,y,s) \in \mathbb{R}^{2(2n+1)} ~ | ~ |x-y|^2 - c_1 (s - t + 2a x^t J y) = 0 \}.
\end{eqnarray*}
We may identifiy the manifold $C_{\Phi} = \{ \left( (x,t), \Phi_{(x,t)}, (y,s), - \Phi_{(y,s)} \right) \}$ with an open set in $\mathbb{R}^{(2n+1)} \times \mathbb{R}^{(2n+1)}$ by the diffeomorphsim $\phi : \mathbb{R}^{(2n+1)} \times \mathbb{R}^{(2n+1)} \rightarrow S$ given by
\begin{eqnarray*}
\phi (x,t,y,s) = \left( (x,t), \Phi_{(x,t)}, (y,s), - \Phi_{(y,s)} \right).
\end{eqnarray*}
Let the null direction $v_L$ in $\mathbb{R}^{2(2n+1)}$ of $d \pi_L$ at $P_0$. It means that
\begin{eqnarray*}
\begin{pmatrix} I & 0 \\ \frac{\partial^2 \Phi}{\partial_{(x,t)} \partial_{(x,t)} } & \frac{\partial^2 \Phi}{\partial_{(y,s)}\partial_{(x,t)}}  \end{pmatrix} v^t = 0
\end{eqnarray*}
Thus, $v$ is of the form $v = (0,0,z,w)$ with $ w \in \mathbb{R}^{2n}, s \in \mathbb{R}$ and $(w,s)$ satisfies
\begin{eqnarray*}
\frac{\partial^2 \Phi}{\partial_{(y,s)}\partial_{(x,t)}} \begin{pmatrix} z^t \\ w \end{pmatrix} = 0
\end{eqnarray*} 
To verify $\det H(x,t,y,s)$ vanishes of order 1 in the direction $v_L$, it suffice to show that $v_L$ is not orthogonal to the gradient vector $v_g$ of $\det H(x,t,y,s)$ at $P_0$.
From a direct calculation, we get the gradient vector $v_g$ as 
\begin{eqnarray*}
\left. D_{(x,t),(y,s)} \Phi \left( (x,t) \cdot (y,s)^{-1}\right)\right|_p = \left( 2(x-y) - 2 a c_{\beta,1} a J y, ~ - c_{\beta,1}, ~ -2 (x-y) - 2 a c_{\beta,1} x^{t} J,~ c_{\beta,1} \right)
\end{eqnarray*}
Suppose that $v_L$ and $v_g$ are orthogonal. We are going to find a contradiction. 
 It means that 
\begin{eqnarray*}
-2 (x-y) \cdot z - 2 a c_{\beta,1} x^t J \cdot z + c_{\beta,1} w = 0
\end{eqnarray*}
From \eqref{matrix}, we have
\begin{eqnarray*}
(\frac{\partial^2}{\partial x_i \partial y_j} \Phi) = A(y) \left[ (\partial_i \partial_j \Phi) - 2a \partial_t \rho \begin{pmatrix} J & 0 \\ 0 & 0 \end{pmatrix} \right] A(x)^t \cdot \begin{pmatrix} z^t \\ w \end{pmatrix}
\end{eqnarray*}
and
\begin{eqnarray*}
A(x)^t \cdot \begin{pmatrix} z^t \\ w \end{pmatrix} &=&
\begin{pmatrix} 1 & 0 & 0 & \cdots & 0 & 0 \\ 0 & \ddots & 0 & \cdots & 0 &0 \\
0&0&1  & \cdots &0&\vdots \\  0 & 0 &0& \ddots & 1 & 0 \\ 2a x_{n+1} & \cdots  & -2a x_1 & \cdots & -2a x_n & 1 
\end{pmatrix} 
\begin{pmatrix}
z_1 \\ z_2 \\ \vdots \\ z_{2n} \\ w 
\end{pmatrix}
\\
&=& \begin{pmatrix} z_1, & z_2,  & \cdots, & z_{2n}, & 2a(x_{n+1} z_1 + \cdots + x_{2n} z_n - x_1 z_{n+1} - \cdots - x_n z_{2n} ) + w
\end{pmatrix}^t
\end{eqnarray*}
On the other hand, from the orthogonal assumption, we have
\begin{eqnarray*}
z \cdot \left( -2(x-y) - 2 c_{\beta,1} a x^t J \right) + w \cdot c_{\beta,1} = 0
\end{eqnarray*}
It means that
\begin{eqnarray*}
2a (x_{n+1} z_1 + \cdots - x_n z_{2n} ) + w = \frac{2 (x-y) \cdot z } {c_{\beta,1}}
\end{eqnarray*}
so 
\begin{eqnarray*}
A(x)^{t} \cdot \begin{pmatrix} z^t \\ w \end{pmatrix} = \begin{pmatrix} z_1, & z_2, & \cdots, & z_{2n}, & \frac{2(x-y) \cdot z }{c_{\beta,1}} \end{pmatrix}^t
\end{eqnarray*}
Observe that
\begin{eqnarray*}
\left[ (\partial_i \partial_j \Phi) - 2 a\partial_t \rho \begin{pmatrix} J & 0 \\ 0&0 \end{pmatrix} \right] = (\beta+4) \begin{pmatrix} |x|^4 x_1^2 & \cdots & |x|^4 x_1 x_n & |x|^2 x_1 \frac{t}{2} 
\\
\vdots & \ddots & \vdots & \vdots 
\\
|x|^4 x_n x_1 & \cdots & |x|^4 x_n^2 & |x|^2 x_n \frac{t}{2} 
\\
|x|^2 \frac{t}{2} x_1 & \cdots & |x|^2 \frac{t}{2} x_n & \frac{t^2}{4} 
\end{pmatrix} - (|x|^4 + t^2 ) \begin{pmatrix} J & 0 \\ 0 & \frac{1}{2} \end{pmatrix}. 
\end{eqnarray*}
Here, $ x \rightarrow x-y $ and $ t \rightarrow t-s + 2a x^t J y = \frac{|x-y|^2}{c_{\beta,1}}$. 
So, if we calculate with the bottom row, the following should hold
\begin{eqnarray*}
(\beta + 4) \left[ |x-y|^2 \cdot \frac{1}{2} \frac{|x-y|^2}{c_{\beta,1}} (x-y) \cdot z + \frac{|x-y|^4}{c_{\beta,1}^2} \cdot \frac{2}{c_{\beta,1}} (x-y) \cdot z \right] - \frac{1}{2} ( |x-y|^4 + \frac{|x-y|^4}{c_{\beta,1}^2} )\frac{2}{c_{\beta,1}} (x-y) \cdot z = 0
\end{eqnarray*}
Rearranging it, we obtain
\begin{eqnarray*}
\left[ \frac{\beta+2}{2 c_{\beta,1}} + \frac{1}{c_{\beta,1}^3}\right] |x-y|^4~ (x-y) \cdot z = 0. 
\end{eqnarray*}
And 
Thus we have $ (x-y) \cdot z =0$. So 
\begin{eqnarray*}
A(x)^t \cdot \begin{pmatrix} z \\ w \end{pmatrix} = \begin{pmatrix} z_1, z_2, \cdots, z_{2n}, 0\end{pmatrix}^{t}
\end{eqnarray*}
and  $ H_1 (x,t,y,s) \cdot \begin{pmatrix} (z_1,z_2,\cdots, z_{2n})\end{pmatrix}^{t} =0 $. But this is a contradiction because $\det H_1 \neq 0$. 
\

We can also have the same conclusion for  $d\pi_R$ with the same argument. 
\end{proof}

\begin{proof}[proof of Theorem \ref{LA} and Theorem \ref{LB}]
The theorems follow immediately when we use Theorem \ref{degenerate} based on the folding types established in  Theorem \ref{folding}. 
\end{proof}

\end{document}